 \font\smallit=cmti10
\renewcommand{\@seccntformat}[1]{\csname the#1\endcsname. }
\newtheorem{theorem}{Theorem}[section]
 \newtheorem{lemma}[theorem]{Lemma}
 \newtheorem{proposition}[theorem]{Proposition}
 \newtheorem{corollary}[theorem]{Corollary}
 \newtheorem{definition}[theorem]{Definition}
\begin{document}
\begin{center}
{\bf On $p$-adic Minkowski's Theorems}

 \vskip 30pt

{\bf Yingpu Deng}\\
 {\smallit  Key Laboratory of Mathematics Mechanization, NCMIS, Academy of Mathematics and Systems Science, Chinese Academy of Sciences, Beijing 100190, People's Republic of China}\\
{and}\\
 {\smallit School of Mathematical Sciences, University of Chinese Academy of Sciences, Beijing 100049, People's Republic of China}\\

 \vskip 10pt

 {\tt dengyp@amss.ac.cn}\\

 \end{center}

 \vskip 30pt

 \centerline{\bf Abstract} Dual lattice is an important concept of Euclidean lattices. In this paper, we first give the right definition of the concept of the dual lattice of a $p$-adic lattice from the duality theory of locally compact abelian groups. The concrete constructions of ``basic characters'' of local fields given in Weil's famous book ``Basic Number Theory'' help us to do so. We then prove some important properties of the dual lattice of a $p$-adic lattice, which can be viewed as $p$-adic analogues of the famous Minkowski's first, second theorems for Euclidean lattices. We do this simultaneously for local fields $\mathbb{Q}_p$ (the field of $p$-adic numbers) and $\mathbb{F}_p((T))$ (the field of formal power-series of one indeterminate with coefficients in the finite field with $p$ elements).
\vspace{0.5cm}

2010 Mathematics Subject Classification: Primary 11F85.

Key words and phrases: $p$-adic lattice, Local field, Norm-orthogonal basis, Successive maxima, Dual lattice.

\noindent

 \pagestyle{myheadings}

 \thispagestyle{empty}
 \baselineskip=12.875pt
 \vskip 20pt

\section{Introduction}
Lattices, as discrete additive subgroups of Euclidean spaces, are important mathematical objects.
The study of lattices in Euclidean spaces has a long history, they are the central objects in the geometry of numbers, see for instance \cite{cas,sie}. Lattices in Euclidean spaces have many applications in some practical areas such as cryptology, see \cite{mg,reg}. In 2018, two new computational problems, the longest vector problem (LVP) and the closest vector problem (CVP) are introduced in \cite{deng1}, also see \cite{deng2}. These computational problems find applications for constructing cryptographic primitives, such as public-key encryption schemes and signature schemes, see \cite{deng3}. Later, these computational problems also find applications for doing $p$-adic Gram-Schmidt orthogonalization process, see \cite{deng4}.

Completely contrary to lattices in Euclidean spaces, in \cite{deng4,zdw}, they showed that there is always an orthogonal basis with respect to any given norm in any $p$-adic lattice of arbitrary rank. This is a completely different phenomenon comparing with Euclidean lattices. In \cite{zdw}, they also showed that the sorted sequence of the norms of any norm-orthogonal basis of any $p$-adic lattice is unique. Then they defined the concept of successive maxima relative to the concept of successive minima of Euclidean lattices.

Dual lattice is an important concept of Euclidean lattices. They play important role in transference theorems and algorithmic reductions of Euclidean lattices. In this paper, we derive the right formulation of the concept of the dual lattice of a $p$-adic lattice from the duality theory of locally compact abelian groups. The concrete constructions of ``basic characters'' of local fields given in Weil's famous book ``Basic Number Theory'' help us to do so.

This paper is organized as follows. In Section \ref{preli}, we recall the necessary preliminaries, including Euclidean lattices and their duals, $p$-adic lattices, LVP and CVP in $p$-adic lattices, and the duality theory of locally compact abelian groups. In Section \ref{dual}, we give the right definition of the dual lattice of a $p$-adic lattice, then we prove some important properties of the dual lattice of a $p$-adic lattice, which can be viewed as $p$-adic analogues of the famous Minkowski's first, second theorems for Euclidean lattices. In Section \ref{computconst}, we show that the constants in Section \ref{dual} are computable by invoking CVP algorithms. We do this simultaneously for lattices over local fields $\mathbb{Q}_p$ (the field of $p$-adic numbers) and $\mathbb{F}_p((T))$ (the field of formal power-series of one indeterminate with coefficients in the finite field with $p$ elements).
\section{Preliminaries}\label{preli}
\subsection{Euclidean lattices and their dual lattices}\label{euclidean}
We first recall the concept of a dual lattice of a lattice in an Euclidean space. Let $m$ be a positive integer, $\mathbb{R}^m$ be the set of $m$-tuples of real numbers. Let $n$ be a positive integer with $n\leq m$. Given $n$ $\mathbb{R}$-linearly independent column vectors $b_1,b_2,\dots,b_n\in\mathbb{R}^m$, the lattice generated by them is defined as
$$\mathcal{L}(b_1,b_2,\dots,b_n)=\left\{\sum_{i=1}^nx_ib_i:x_i\in\mathbb{Z},1\leq i\leq n\right\}.$$
We call $b_1,b_2,\dots,b_n$ a basis of the lattice. Let $B$ be the $m\times n$ matrix whose columns are $b_1,b_2,\dots,b_n$, then the lattice generated by $B$ is
$$\mathcal{L}(B)=\mathcal{L}(b_1,b_2,\dots,b_n)=\{Bx:x\in\mathbb{Z}^n\}.$$

For a lattice $\Lambda$, we define its dual lattice
$$\Lambda^*=\{y\in \mbox{span}(\Lambda):y\cdot x\in\mathbb{Z},\forall x\in\Lambda\}$$
where span$(\Lambda)$ denotes the subspace spanned by all vectors in $\Lambda$ and $y\cdot x=\sum_{i=1}^my_ix_i$, for $y^t=(y_1,\ldots,y_m)$ and $x^t=(x_1,\ldots,x_m)$. Here $y^t$ denotes the transpose of column vector $y$. This usual definition of dual lattice of an Euclidean lattice can be found in \cite{reg,cas,sie}.

\subsection{$p$-adic lattices}

$p$-adic lattices have also been studied in mathematical literatures, see \cite{wei}. For a prime $p$, $\mathbb{Q}_p$ denotes the field of $p$-adic numbers. Formally,
$$\mathbb{Q}_p=\left\{\sum_{i=t}^{\infty}a_ip^i:t\in\mathbb{Z},a_i\in\{0,1,\ldots,p-1\}\right\}.$$
For $a\in\mathbb{Q}_p,a\neq0$, write $a=\sum_{i=t}^{\infty}a_ip^i$ as above and $a_t\neq0$, we define its $p$-adic absolute value as
$$|a|_p=p^{-t}\mbox{ and }|0|_p=0.$$
Define $\mathbb{Z}_p=\{a\in\mathbb{Q}_p:|a|_p\leq1\}$. Thus, $\mathbb{Q}_p$ becomes an ultrametric space, and $\mathbb{Z}_p$ is its maximal compact subring. One can see
$$\mathbb{Z}_p=\left\{\sum_{i=0}^{\infty}a_ip^i:a_i\in\{0,1,\ldots,p-1\}\right\}.$$

Also, for a prime $p$, $\mathbb{F}_p((T))$ denotes the field of formal power-series
$$\mathbb{F}_p((T))=\left\{\sum_{i=t}^{\infty}f_iT^i:t\in\mathbb{Z},f_i\in\mathbb{F}_p\right\}$$
where $\mathbb{F}_p$ denotes the finite field with $p$ elements and $T$ is an indeterminate. For $f\in\mathbb{F}_p((T)),f\neq0$, write $f=\sum_{i=t}^{\infty}f_iT^i$ with $f_t\neq0$, we define its $p$-adic absolute value as
$$|f|_p=p^{-t}\mbox{ and }|0|_p=0.$$
Define $\mathbb{F}_p[[T]]=\{f\in\mathbb{F}_p((T)):|f|_p\leq1\}$. Thus, $\mathbb{F}_p((T))$ becomes an ultrametric space, and $\mathbb{F}_p[[T]]$ is its maximal compact subring. One can see
$$\mathbb{F}_p[[T]]=\left\{\sum_{i=0}^{\infty}f_iT^i:f_i\in\mathbb{F}_p\right\}.$$
We know that $\mathbb{Q}_p$ is a commutative field of characteristic 0 and $\mathbb{F}_p((T))$ is a commutative field of characteristic $p$.

Let $k$ be $\mathbb{Q}_p$ or $\mathbb{F}_p((T))$ and $\mathcal{O}_k$ be its maximal compact subring defined as above. Let $m$ be a positive integer, $k^m$ be the set of $m$-tuples of elements in $k$. Let $n$ be a positive integer with $n\leq m$. Given $n$ $k$-linearly independent column vectors $b_1,b_2,\dots,b_n\in k^m$, the $p$-adic lattice generated by them is defined as
$$\mathcal{L}(b_1,b_2,\dots,b_n)=\left\{\sum_{i=1}^nx_ib_i:x_i\in\mathcal{O}_k,1\leq i\leq n\right\}.$$
We call $b_1,b_2,\dots,b_n$ a basis of the lattice. Let $B$ be the $m\times n$ matrix whose columns are $b_1,b_2,\dots,b_n$, then the lattice generated by $B$ is
$$\mathcal{L}(B)=\mathcal{L}(b_1,b_2,\dots,b_n)=\{Bx:x\in\mathcal{O}_k^n\}.$$
$n$ is called the rank of the $p$-adic lattice, if $n=m$, the $p$-adic lattice is called of full rank.
$p$-adic lattices are compact subsets of $k^m$. $p$-adic lattices obviously can be defined in any vector space over $k$.

\begin{definition}[$N$-orthogonal basis]
	Let $V$ be a vector space over $k$ of finite dimension $n>0$, and let $N$ be a norm on $V$. (The definition of a norm can be found in Section \ref{lvpcvp}.) We call $\alpha_1,\ldots,\alpha_n$  an $N$-orthogonal basis of $V$ over $k$ if  $V$ can be decomposed  into the direct sum of $n$ 1-dimensional subspaces $V_i$'s $(1\leq i\leq n)$, such that
	$$N\left(\sum_{i=1}^nv_i\right)=\max_{1\leq i\leq n}N(v_i), \forall v_i\in V_i, i=1,\ldots,n,$$
	where  $V_i$ is spanned by $\alpha_i$. Two subspaces $U,W$ of $V$ is said to be $N$-orthogonal if the sum $U+W$ is a direct sum and it holds that $N(u+w)=\max(N(u),N(w))$ for all $u\in U,w\in W$.
\end{definition}

Weil \cite[p.26, prop.3]{wei} proved the existence of an $N$-orthogonal basis of any vector space over $k$ of finite dimension.

\begin{definition}
Let $V$ be a vector space over $k$ of finite dimension $m>0$, and let $N$ be a norm on $V$. If $\alpha_1,\ldots,\alpha_n$ is an $N$-orthogonal basis of the vector space spanned by a lattice $\mathcal{L}=\sum_{i=1}^{n}\mathcal{O}_k\alpha_i$, then we call $\alpha_1,\ldots,\alpha_n$ an $N$-orthogonal basis of the lattice $\mathcal{L}$.
\end{definition}

Given a norm $N$ on $V$, for a $p$-adic lattice $\mathcal{L}$ of rank $n$ in $V$, does it has an $N$-orthogonal basis? For $n=1$, the answer is obviously affirmative. For $n=2$, Deng \cite{deng4} showed that the answer is affirmative. Zhang et al. \cite{zdw} showed that the answer is also affirmative for any $p$-adic lattice with arbitrary rank. Notice that, in \cite{deng4,zdw}, they proved the results only for $k=\mathbb{Q}_p$, but the reasoning is obviously true for $k=\mathbb{F}_p((T))$.

\begin{theorem}\cite{deng4,zdw}\label{latticeorthobasis}
Let $V$ be a vector space over $k$, and let $N$ be a norm on $V$. Let $\mathcal{L}$ be a $p$-adic lattice in $V$ of rank $n>0$. Then $\mathcal{L}$ must have an $N$-orthogonal basis.
\end{theorem}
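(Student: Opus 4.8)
The plan is to argue by induction on the rank $n$ of $\mathcal{L}$. The case $n=1$ is immediate, since a single generator $\alpha_1$ already satisfies the defining condition of an $N$-orthogonal basis. For the inductive step I would peel off one carefully chosen basis vector, pass to a quotient lattice of rank $n-1$, apply the induction hypothesis there, and then lift the resulting orthogonal basis back to $\mathcal{L}$.

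Concretely, I would first choose $\beta_1\in\mathcal{L}$ of \emph{maximal} norm. Such a vector exists because $\mathcal{L}$ is compact (as recalled above) and $N$ is continuous, so $N$ attains its maximum on $\mathcal{L}$; note $\beta_1\neq0$ since the rank is positive. Maximality forces $\beta_1$ to be primitive: if $\beta_1=\varpi\gamma$ with $\gamma\in\mathcal{L}$ and $\varpi$ a uniformizer of $\mathcal{O}_k$ (so $|\varpi|_p<1$), then $N(\gamma)=N(\beta_1)/|\varpi|_p>N(\beta_1)$, contradicting maximality; hence the image of $\beta_1$ in $\mathcal{L}/\varpi\mathcal{L}$ is nonzero and $\beta_1$ extends to an $\mathcal{O}_k$-basis. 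Writing $\bar V=V/k\beta_1$ and $q\colon V\to\bar V$ for the projection, the image $\bar{\mathcal{L}}=q(\mathcal{L})$ is a $p$-adic lattice of rank $n-1$, and I would equip $\bar V$ with the quotient norm $\bar N(\bar v)=\inf_{a\in k}N(v+a\beta_1)$. A routine check shows $\bar N$ is a genuine norm (the infimum is positive and attained, because $k\beta_1$ is closed and the relevant sublevel set of $a$ is compact), so the induction hypothesis yields an $\bar N$-orthogonal basis $\bar\beta_2,\dots,\bar\beta_n$ of $\bar{\mathcal{L}}$.

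The decisive step is the lift. For each $j\geq2$ I would pick any $\tilde\beta_j\in\mathcal{L}$ with $q(\tilde\beta_j)=\bar\beta_j$ and then choose $a_0\in k$ attaining $\bar N(\bar\beta_j)=N(\tilde\beta_j+a_0\beta_1)$, setting $\beta_j=\tilde\beta_j+a_0\beta_1$, so that $N(\beta_j)=\bar N(\bar\beta_j)$; that is, $\beta_j$ is shortest in its coset modulo $k\beta_1$. Here is where the maximality of $\beta_1$ is essential, and this is the main obstacle: a priori the minimizing $a_0$ lies in $k$, not in $\mathcal{O}_k$, so $\beta_j$ need not belong to $\mathcal{L}$. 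But from $N(\tilde\beta_j+a_0\beta_1)\leq N(\tilde\beta_j)$ and the ultrametric inequality applied to $a_0\beta_1=(\tilde\beta_j+a_0\beta_1)-\tilde\beta_j$ one gets $|a_0|_p\,N(\beta_1)\leq N(\tilde\beta_j)\leq N(\beta_1)$, the last inequality by maximality; hence $|a_0|_p\leq1$, so $a_0\in\mathcal{O}_k$ and $\beta_j\in\mathcal{L}$. Since $\beta_1$ is primitive and $\bar\beta_2,\dots,\bar\beta_n$ is a basis of $\bar{\mathcal{L}}$, the lifts $\beta_1,\beta_2,\dots,\beta_n$ then form an $\mathcal{O}_k$-basis of $\mathcal{L}$.

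It remains to verify $N$-orthogonality, for which I would isolate the elementary lemma: if $u\in V$ is shortest in its coset modulo a subspace $L$ (that is, $N(u)\leq N(u+x)$ for all $x\in L$), then $N(bu+x)=\max(|b|_p N(u),N(x))$ for all $b\in k$ and $x\in L$; this follows directly from the ultrametric inequality by separating the cases $|b|_p N(u)\geq N(x)$ and $|b|_p N(u)<N(x)$. Applying the induction hypothesis together with $N(\beta_j)=\bar N(\bar\beta_j)$ shows first that, for $w=\sum_{j\geq2}c_j\beta_j$, one has $N(w)=\bar N(q(w))=\max_{j\geq2}|c_j|_p N(\beta_j)$, so $w$ is shortest in its coset modulo $k\beta_1$; the lemma with $u=w$ and $L=k\beta_1$ then gives $N(c_1\beta_1+w)=\max(|c_1|_p N(\beta_1),N(w))=\max_{1\leq i\leq n}|c_i|_p N(\beta_i)$, which is exactly the asserted $N$-orthogonality of $\beta_1,\dots,\beta_n$. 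I expect the only delicate points to be the attainment statements for the infimum defining $\bar N$ and, above all, the containment $a_0\in\mathcal{O}_k$ that ties the metric minimization to the lattice structure; everything else is bookkeeping with the ultrametric inequality.
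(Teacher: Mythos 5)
Your proof is correct. Note first that the paper itself does not prove this theorem: it is imported from \cite{deng4,zdw}, so there is no in-paper argument to compare against. Your induction is sound, and the two points you flag as delicate are exactly the right ones and are both handled correctly: the infimum defining the quotient norm $\bar N$ is attained because $N(v+a\beta_1)=|a|_pN(\beta_1)>N(v)$ once $|a|_pN(\beta_1)>N(v)$, so the minimization takes place over a compact ball of $a$'s; and the containment $a_0\in\mathcal{O}_k$ follows from $|a_0|_pN(\beta_1)=N(\beta_j-\tilde\beta_j)\le N(\tilde\beta_j)\le N(\beta_1)$, which is precisely where choosing $\beta_1$ of \emph{maximal} norm (rather than an arbitrary primitive vector) is needed. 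The remaining steps --- primitivity of a maximal-norm vector, the identity $\mathcal{L}\cap k\beta_1=\mathcal{O}_k\beta_1$ used to see that the lifts generate, and the ultrametric coset lemma --- all check out. Structurally, your argument is the lattice-level version of Weil's proof of the existence of $N$-orthogonal bases for vector spaces (\cite[p.26, prop.3]{wei}, which the paper quotes): Weil likewise minimizes the norm over a coset modulo a subspace, and your additional input is the observation that maximality of $N(\beta_1)$ forces the minimizer to stay inside the lattice. The cited references reach the same conclusion more computationally, by repeatedly solving CVP instances against sublattices spanned by previously chosen vectors; your choice of $a_0$ attaining $\inf_{a\in k}N(\tilde\beta_j+a\beta_1)$ is exactly such a CVP instance in rank one, so the two routes are close in spirit, but your top-down induction through the quotient norm yields a cleaner self-contained existence proof at the cost of not being directly algorithmic.
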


In \cite{zdw}, they also proved the following result.

\begin{theorem}\cite{zdw}
Let $V$ be a vector space over $k$, and let $N$ be a norm on $V$. Let $(\alpha_1,\ldots,\alpha_n)$ and $(\beta_1,\ldots,\beta_n)$ be two $N$-orthogonal bases of a $p$-adic lattice $\mathcal{L}$ of rank $n$ in $V$. Assume $N(\alpha_1)\geq N(\alpha_2)\geq\cdots\geq N(\alpha_n)$ and $N(\beta_1)\geq N(\beta_2)\geq\cdots\geq N(\beta_n)$. Then we have $N(\alpha_i)=N(\beta_i)$ for $1\leq i\leq n$.
\end{theorem}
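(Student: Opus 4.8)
The plan is to reduce the statement to a combinatorial fact about the change-of-basis matrix in $GL_n(\mathcal{O}_k)$. First I would record the basic evaluation formula for an $N$-orthogonal basis: if $\alpha_1,\ldots,\alpha_n$ is $N$-orthogonal, then for any $x_1,\ldots,x_n\in k$ one has $N(\sum_i x_i\alpha_i)=\max_i |x_i|_p N(\alpha_i)$, which is immediate from the direct-sum decomposition in the definition together with the homogeneity $N(x_i\alpha_i)=|x_i|_p N(\alpha_i)$. Since both $(\alpha_i)$ and $(\beta_j)$ are $\mathcal{O}_k$-bases of the same free $\mathcal{O}_k$-module $\mathcal{L}$, they are related by a matrix $U=(u_{ij})\in GL_n(\mathcal{O}_k)$, say $\beta_j=\sum_i u_{ij}\alpha_i$, with inverse $V=U^{-1}\in GL_n(\mathcal{O}_k)$; in particular all entries satisfy $|u_{ij}|_p\le 1$ and $|v_{ij}|_p\le 1$, and $|\det U|_p=1$. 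The evaluation formula then gives $N(\beta_j)=\max_i |u_{ij}|_p\,N(\alpha_i)$.

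The key step, and the part I expect to be the main obstacle, is to exploit $|\det U|_p=1$. Expanding the determinant and using the ultrametric inequality, $1=|\det U|_p\le \max_{\sigma}\prod_i |u_{i\sigma(i)}|_p$, where $\sigma$ ranges over permutations. Since every factor $|u_{i\sigma(i)}|_p\le 1$, the maximizing product must itself equal $1$, which forces a single permutation $\sigma$ with $|u_{i\sigma(i)}|_p=1$ for all $i$. Substituting $j=\sigma(i)$ into the evaluation formula yields $N(\beta_{\sigma(i)})\ge |u_{i\sigma(i)}|_p N(\alpha_i)=N(\alpha_i)$, so there is a bijection matching each $N(\alpha_i)$ to a distinct value $N(\beta_{\sigma(i)})$ that dominates it.

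Finally I would run the identical argument with $V=U^{-1}$ (also of unit determinant) in place of $U$, producing a permutation $\tau$ with $N(\alpha_{\tau(j)})\ge N(\beta_j)$ for all $j$. A bijection matching the $\alpha$-norms beneath the $\beta$-norms forces the decreasingly sorted sequence of $\beta$-norms to dominate that of the $\alpha$-norms termwise, while the reverse bijection gives the opposite domination; hence the two sorted sequences coincide, i.e.\ $N(\alpha_i)=N(\beta_i)$ for all $i$. The only genuinely delicate point is the determinant computation, since it is exactly where the non-archimedean structure (every matrix in $GL_n(\mathcal{O}_k)$ must carry a permutation of unit entries) replaces the Euclidean volume/covolume argument; the passage from the two dominating bijections to equality of the sorted sequences, and all remaining verifications, are then purely formal.
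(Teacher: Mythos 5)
Your argument is correct, but it takes a genuinely different route from the paper. The paper works index by index: it first identifies $N(\alpha_1)=N(\beta_1)$ as the maximal norm on $\mathcal{L}$, then handles $N(\alpha_n)$ and the intermediate indices by contradiction, reducing the coefficients of the $\beta_i$'s modulo the uniformizer $\pi$ and using linear dependence of $n-j+1$ vectors in $\mathbb{F}_p^{n-j}$ to produce a lattice vector $\sum_i (b_i/\pi)\beta_i\in\mathcal{L}$ that contradicts $\beta_j,\ldots,\beta_n$ being part of a basis. You instead package everything into the change-of-basis matrix $U\in \mathrm{GL}_n(\mathcal{O}_k)$: the ultrametric expansion of $\det U$ (note $|\mathrm{sgn}(\sigma)|_p=1$ in both characteristics, so this is fine) forces some permutation $\sigma$ with all $|u_{i\sigma(i)}|_p=1$, the evaluation formula $N(\beta_j)=\max_i|u_{ij}|_pN(\alpha_i)$ then gives $N(\beta_{\sigma(i)})\geq N(\alpha_i)$, and the symmetric argument for $U^{-1}$ plus the standard majorization step (a bijection dominating termwise forces domination of the sorted sequences) yields equality. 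Your determinant step is sound --- each product over a permutation is at most $1$, so the maximum must be attained and equal to $1$ --- and the final counting argument (``at least $i$ of the $\beta$-norms are $\geq N(\alpha_i)$, hence the $i$-th largest is'') is correct. Your proof is shorter and yields the slightly stronger conclusion that a single permutation matches each $\alpha_i$ to a distinct $\beta_j$ of at least equal norm; the paper's proof is more elementary in that it avoids the determinant and only uses reduction mod $\pi$ and the pigeonhole principle over $\mathbb{F}_p$. Two cosmetic points: you reuse the letter $V$ both for the ambient vector space and for $U^{-1}$, and ``forces a single permutation'' should read ``forces at least one permutation.''
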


\begin{proof}
Below, we give a shorter proof than that in \cite{zdw}. Set $\pi=p$ if $k=\mathbb{Q}_p$ or $\pi=T$ if $k=\mathbb{F}_p((T))$. Then $\pi$ is a uniformizer of $k$ with $|\pi|_p=p^{-1}$ and we have $\mathcal{O}_k/\pi\mathcal{O}_k=\mathbb{F}_p$.

For any vector $v\in\mathcal{L}$, write $v=\sum_{i=1}^na_i\alpha_i$ with $a_i\in\mathcal{O}_k$. Then we have
$$N(v)\leq\max_{1\leq i\leq n} N(a_i\alpha_i)\leq\max_{1\leq i\leq n} N(\alpha_i)=N(\alpha_1).$$
That is, $N(\alpha_1)$ is the biggest norm of all the vectors in the lattice $\mathcal{L}$. Similarly, $N(\beta_1)$ is also the biggest norm of all the vectors in the lattice $\mathcal{L}$. Thus $N(\alpha_1)=N(\beta_1)$.

Now we prove $N(\alpha_n)=N(\beta_n)$. Suppose not, without loss of generality, we may suppose $N(\alpha_n)>N(\beta_n)$. Write $\beta_n=\sum_{i=1}^na_i\alpha_i$ with $a_i\in\mathcal{O}_k$. If there is a $j\in\{1,\ldots,n\}$ with $a_j\in\mathcal{O}_k^{\times}=\{a\in\mathcal{O}_k:|a|_p=1\}$, since $(\alpha_1,\ldots,\alpha_n)$ is an $N$-orthogonal bases of $\mathcal{L}$, we have
$$N(\beta_n)=\max_{1\leq i\leq n}N(a_i\alpha_i)\geq N(a_j\alpha_j)=N(\alpha_j)>N(\beta_n).$$
This yields a contradiction. Thus $a_i\in\pi\mathcal{O}_k$ for all $1\leq i\leq n$. But then
$$\frac{\beta_n}{\pi}=\sum_{i=1}^n\frac{a_i}{\pi}\alpha_i\in\mathcal{L}.$$
Since $\beta_n$ is a basis vector of the lattice $\mathcal{L}$, this is impossible. Therefore, we must have $N(\alpha_n)=N(\beta_n)$.

We have proved that $N(\alpha_1)=N(\beta_1)$ and $N(\alpha_n)=N(\beta_n)$. Thus, for $n=1,2$, the statement in the theorem is true. Now we assume $n>2$.

Below we will prove $N(\alpha_j)=N(\beta_j)$ for all $1<j<n$. Suppose not, without loss of generality, we may suppose $N(\alpha_j)>N(\beta_j)$ for some $1<j<n$.

For each $j\leq i\leq n$, write
$$\beta_i=\sum_{l=1}^na_{il}\alpha_l\mbox{ with }a_{il}\in\mathcal{O}_k.$$
Since $N(\beta_i)<N(\alpha_j)\leq N(\alpha_{j-1})\leq\cdots\leq N(\alpha_1)$, similar to the above reasoning, we have
$$a_{il}\in\pi\mathcal{O}_k\mbox{ for all }1\leq l\leq j.$$
For an element $a\in\mathcal{O}_k$, we denote by $\overline{a}$ the image of $a$ in $\mathcal{O}_k/\pi\mathcal{O}_k=\mathbb{F}_p$. Thus, we have $n-j+1$ many $(n-j)$-tuples $(\overline{a_{i,j+1}},\ldots,\overline{a_{i,n}})\in\mathbb{F}_p^{n-j}$ for $j\leq i\leq n$. These $n-j+1$ many $(n-j)$-tuples in $\mathbb{F}_p^{n-j}$ must be linearly dependent over $\mathbb{F}_p$. That is, there are $b_j,b_{j+1},\ldots,b_n\in\mathcal{O}_k$, with $(\overline{b_j},\overline{b_{j+1}},\ldots,\overline{b_n})\neq0$ such that
$$\sum_{i=j}^n\overline{b_i}(\overline{a_{i,j+1}},\ldots,\overline{a_{i,n}})=0.$$
That is
$$\sum_{i=j}^nb_i(a_{i,j+1},\ldots,a_{i,n})\in\left(\pi\mathcal{O}_k\right)^{n-j}.$$
Hence
$$\sum_{i=j}^nb_i\beta_i\in\sum_{l=1}^n\pi\mathcal{O}_k\alpha_l=\pi\mathcal{L}(\alpha_1,\ldots,\alpha_n).$$
Thus
$$\sum_{i=j}^n\frac{b_i}{\pi}\beta_i\in\mathcal{L}.$$
Since at least one of the $b_j,b_{j+1},\ldots,b_n$ is in $\mathcal{O}_k^{\times}$ and $\beta_j,\beta_{j+1},\ldots,\beta_n$ are basis vectors of the lattice $\mathcal{L}$, this is impossible. Therefore, we must have $N(\alpha_j)=N(\beta_j)$ for all $1<j<n$. The proof of the theorem is complete.
\end{proof}

Due to this theorem, we can give the following definition.

\begin{definition}[successive maxima]
Let $V$ be a vector space over $k$, and let $N$ be a norm on $V$. Let $(\alpha_1,\ldots,\alpha_n)$ be an $N$-orthogonal bases of a $p$-adic lattice $\mathcal{L}$ of rank $n$ in $V$. Assume $N(\alpha_1)\geq N(\alpha_2)\geq\cdots\geq N(\alpha_n)$. We call $N(\alpha_i)(1\leq i\leq n)$ the $i$-th successive maxima of the lattice $\mathcal{L}$ with respect to the norm $N$. We denote it by $\overline{\lambda_i}(\mathcal{L})$.
\end{definition}

\subsection{LVP and CVP in $p$-adic Lattices}\label{lvpcvp}
The shortest vector problem (SVP) and the closest vector problem (CVP) are two famous computational problems in Euclidean lattices. In $p$-adic lattices, we can define the longest vector problem (LVP) and the closest vector problem (CVP) as follows. Let $k$ be $\mathbb{Q}_p$ or $\mathbb{F}_p((T))$. Let $V$ be a vector space over $k$. A norm $N$ on $V$ is a function
$$N:V\longrightarrow\mathbb{R}$$
such that:
\begin{enumerate}
\item[(i)] $N(v)\geq0,\forall v\in V, \mbox{ and }N(v)=0\mbox{ if and only if }v=0;$
\item[(ii)] $N(xv)=|x|_p\cdot N(v),\forall x\in k, v\in V;$
\item[(iii)] $N(v+w)\leq\max(N(v),N(w)), \forall v,w\in V.$
\end{enumerate}
Here, $|x|_p$ is the $p$-adic absolute value for $x\in k$.

The following proposition can be found in \cite[p.72, prop.]{rob}.

\begin{proposition}
Let $\Omega\subset V$ be a compact subset. (a) For every $a\in V-\Omega$, the set of norms $\{N(x-a):x\in\Omega\}$ is finite. (b) For every $a\in\Omega$, the set of norms $\{N(x-a):x\in\Omega-\{a\}\}$ is discrete in $\mathbb{R}_{>0}$.
\end{proposition}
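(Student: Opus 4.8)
The plan is to reduce both parts to the ultrametric ``isosceles triangle'' principle, namely that $N(u+w)=\max(N(u),N(w))$ whenever $N(u)\neq N(w)$, together with the fact that $N$ turns $V$ into a metric space via $d(x,y)=N(x-y)$, so that the compact set $\Omega$ is sequentially compact and $N$ is continuous. The single mechanism behind both statements is that a norm value which is realized by a nearby point gets ``locked'': if a sequence $x_j$ converges to a point $x_0$ with $N(x_0-a)>0$, then the difference $N(x_j-x_0)$ eventually drops below $N(x_0-a)$, and the isosceles principle applied to $x_j-a=(x_j-x_0)+(x_0-a)$ forces $N(x_j-a)=N(x_0-a)$ for all large $j$.

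For part (a) I would argue by contradiction. Suppose $\{N(x-a):x\in\Omega\}$ were infinite; then I can choose a sequence $(x_j)$ in $\Omega$ whose values $N(x_j-a)$ are pairwise distinct. By sequential compactness, after passing to a subsequence, $x_j\to x_0$ for some $x_0\in\Omega$. Since $a\notin\Omega$ we have $x_0\neq a$, hence $N(x_0-a)>0$. As $N(x_j-x_0)\to 0$, for all large $j$ we get $N(x_j-x_0)<N(x_0-a)$, and the isosceles principle then gives $N(x_j-a)=N(x_0-a)$ for all large $j$. This contradicts the pairwise distinctness of the $N(x_j-a)$, so the value set is finite.

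For part (b) the claim is that $S=\{N(x-a):x\in\Omega-\{a\}\}$ has no accumulation point in $\mathbb{R}_{>0}$ (accumulation at $0$ is allowed, and indeed typically occurs when $a$ is not isolated in $\Omega$, e.g.\ when $\Omega$ is a lattice). Again I argue by contradiction: if $r>0$ were an accumulation point of $S$, I could pick $x_j\in\Omega-\{a\}$ with the $N(x_j-a)$ pairwise distinct and converging to $r$. Passing to a convergent subsequence $x_j\to x_0\in\Omega$, continuity of $N$ gives $N(x_0-a)=r>0$, so $x_0\neq a$, and the same isosceles argument yields $N(x_j-a)=N(x_0-a)=r$ for all large $j$, contradicting distinctness. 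Hence $S$ is discrete in $\mathbb{R}_{>0}$.

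The only delicate point is the passage to the isosceles equality, which requires $N(x_j-x_0)$ to eventually fall strictly below the positive number $N(x_0-a)$; this is exactly what convergence $x_j\to x_0$ supplies once we know $x_0\neq a$. I expect no real obstacle beyond bookkeeping. As an alternative, more structural route one can bypass the sequences entirely: invoking Weil's proposition to fix an $N$-orthogonal basis $\alpha_1,\ldots,\alpha_n$ of $V$, every nonzero $v=\sum_i x_i\alpha_i$ satisfies $N(v)=\max_{1\leq i\leq n}|x_i|_p\,N(\alpha_i)$, so the entire range of $N$ on $V-\{0\}$ lies in the set $\bigcup_{i=1}^{n}\{p^{m}N(\alpha_i):m\in\mathbb{Z}\}$, which is a finite union of geometric progressions and hence discrete (locally finite) in $\mathbb{R}_{>0}$. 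Part (b) is then immediate, since a subset of a discrete set is discrete; and part (a) follows because $\{N(x-a):x\in\Omega\}$ is the continuous image of the compact set $\Omega$, lies in $\mathbb{R}_{>0}$ as $a\notin\Omega$, and is therefore a compact subset of a locally finite set, so it is finite.
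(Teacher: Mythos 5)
Your proof is correct. Note first that the paper itself gives no proof of this proposition: it is quoted verbatim from Robert's \emph{A course in $p$-adic analysis} \cite[p.72, prop.]{rob}, so there is nothing in the source to compare against line by line. Your sequential argument is sound and is essentially the standard one: the key observation that $N(x_j-a)=N(x_0-a)$ once $N(x_j-x_0)<N(x_0-a)$ is exactly the statement that the function $x\mapsto N(x-a)$ is locally constant on $V-\{a\}$ (constant on the open ball of radius $N(x-a)$ about $x$), and Robert's own proof is the open-cover formulation of this same fact --- for (a) one extracts a finite subcover of $\Omega$ by such balls, which is marginally cleaner than extracting convergent subsequences but not materially different. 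Both your main argument and the covering version work for an arbitrary ultrametric normed space; your alternative route via an $N$-orthogonal basis (value group contained in $\bigcup_i p^{\mathbb{Z}}N(\alpha_i)$, a locally finite subset of $\mathbb{R}_{>0}$) is also valid but genuinely uses finite-dimensionality of $V$ through Weil's proposition. That hypothesis holds everywhere in this paper, and in fact that second route proves something stronger and more in the spirit of the later sections (the whole spectrum of $N$ is locally finite, not just its restriction to a compact set), so it is worth keeping in mind even though the sequential argument already suffices.
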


Due to this proposition, we can define the longest vector problem (LVP) and the closest vector problem (CVP) in $p$-adic lattices in this general setting as in \cite{deng2}, and all results in \cite{deng2} obviously hold also for any norm $N$ if we can compute efficiently the norm $N(v)$ of any vector $v\in V$. We recall here the definition of CVP.

\begin{definition}
Let $V$ be a vector space over $k$. Let $N$ be a norm on $V$. Given a lattice $\mathcal{L}=\mathcal{L}(\alpha_1,\ldots,\alpha_n)$ and a target vector $t\in V$. The closest vector problem (CVP) is to find a lattice vector $v_0\in\mathcal{L}$ such that
$$N(t-v_0)=\min_{v\in\mathcal{L}}N(t-v):=\mbox{dist}(t,\mathcal{L}).$$
\end{definition}

\subsection{Duality theory for locally compact abelian groups}
To give the right definition of a dual lattice of a $p$-adic lattice, we recall the duality theory for locally compact abelian groups, see Chapter II, \S 5 of \cite{wei} and a complete treatment can be found in \cite{hr}.

Let $G$ be a locally compact abelian group. Set $U=\{z\in\mathbb{C}:|z|=1\}$. A character of $G$ is a continuous representation of $G$ into $U$. If $g^*$ is such a character, we write $\langle g,g^*\rangle_G$ for the value of $g^*$ at a point $g\in G$. Let $G^*$ be the set of all characters of $G$. $G^*$ has an abelian group structure and one can define a topology on $G^*$ such that $G^*$ is a locally compact group. We call $G^*$ the topological dual of $G$. If $H$ is any closed subgroup of $G$, we define $H_*=\{g^*\in G^*:\langle h,g^*\rangle_G=1,\forall h\in H\}$. $H_*$ is said to be associated with $H$ by duality, and it is isomorphic to the dual of $G/H$.

Let $k$ be $\mathbb{R}, \mathbb{Q}_p$ or $\mathbb{F}_p((T))$. Let $V$ be a vector space of finite dimension over $k$. We know that the topological dual $V^*$ of $V$ has also a vector-space structure over $k$. If $L$ is any closed subgroup of $V$, the subgroup $L_*$ of $V^*$ associated with $L$ by duality consists of the elements $v^*$ of $V^*$ such that $\langle v,v^*\rangle_V=1$ for all $v\in L$.

On the other hand, if $V$ is as above, we may consider its algebraic dual $V'$, which is the space of all $k$-linear forms on $V$. We denote by $[v,v']_V$ the value of the linear form $v'$ on $V$ at the point $v$ of $V$. The main theorem of duality theory is the following:

\begin{theorem}\cite[p.40, th.3]{wei}\label{duality}
Let $k$ be $\mathbb{R}, \mathbb{Q}_p$ or $\mathbb{F}_p((T))$. Let $V$ be a vector space of finite dimension over $k$ and let $\chi$ be a non-trivial character of the additive group of $k$. Then the formula
$$\langle v,v^*\rangle_V=\chi([v,v']_V) \mbox{ for all }v\in V$$
defines an isomorphism $v'\longmapsto v^*$ from the algebraic dual $V'$ onto the topological dual $V^*$ as vector-spaces over $k$.
\end{theorem}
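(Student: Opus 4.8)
The plan is to show that the assignment $\Phi\colon V'\to V^*$, $v'\mapsto v^*$, is a well-defined, $k$-linear bijection; the surjectivity is where the real work lies. Recall that $V^*$ carries the $k$-vector-space structure determined by $\langle v,av^*\rangle_V=\langle av,v^*\rangle_V$ for $a\in k$, while the algebraic dual satisfies $[v,av']_V=a[v,v']_V$. For fixed $v'$ the map $v\mapsto[v,v']_V$ is a continuous additive homomorphism $V\to k$, and $\chi\colon k\to U$ is a continuous character, so the composite $v\mapsto\chi([v,v']_V)$ is a continuous character of the additive group of $V$; hence $v^*\in V^*$ and $\Phi$ is well-defined. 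Additivity follows from $\chi([v,v_1'+v_2']_V)=\chi([v,v_1']_V)\chi([v,v_2']_V)$, and for $a\in k$ one computes $\langle v,\Phi(av')\rangle_V=\chi(a[v,v']_V)=\chi([av,v']_V)=\langle av,\Phi(v')\rangle_V=\langle v,a\Phi(v')\rangle_V$, so $\Phi(av')=a\Phi(v')$ and $\Phi$ is $k$-linear.

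For injectivity, suppose $\Phi(v')$ is the trivial character, i.e.\ $\chi([v,v']_V)=1$ for all $v\in V$. If $v'\neq0$, pick $v_0$ with $[v_0,v']_V=b\neq0$; then $\chi(ab)=\chi([av_0,v']_V)=1$ for every $a\in k$, and since $b\neq0$ the products $ab$ exhaust $k$, forcing $\chi$ to be trivial, contrary to hypothesis. Hence $v'=0$ and $\Phi$ is injective.

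The main obstacle is surjectivity: every continuous character of $V$ must be of the form $\chi([\cdot,v']_V)$. I would reduce to the one-dimensional case by fixing a $k$-basis $e_1,\dots,e_n$, identifying $V\cong k^n$ topologically (all norms on a finite-dimensional space over a local field are equivalent), and invoking $(k^n)^*\cong(k^*)^n$, under which a character $\psi$ of $k^n$ factors as $\psi(x)=\prod_i\psi_i(x_i)$ with $\psi_i$ its restriction to the $i$-th factor. It then suffices to prove the self-duality of $k$: every continuous character $\psi$ of $k$ equals $x\mapsto\chi(ax)$ for some $a\in k$ (unique, by injectivity), after which the collected scalars $a_i$ assemble into the required $v'$ via the dual basis. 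For $k=\mathbb{R}$ this is classical Fourier theory. For $k=\mathbb{Q}_p$ or $\mathbb{F}_p((T))$, I would argue that continuity of $\psi$ together with the absence of small subgroups in $U$ forces $\psi$ to be trivial on some $\pi^N\mathcal{O}_k$; the basic character $\chi$ has a level (for Weil's normalization it is trivial exactly on $\mathcal{O}_k$), and the pairing $(a,x)\mapsto\chi(ax)$ then restricts to a nondegenerate pairing between two finite quotients of the shapes $\pi^{-N}\mathcal{O}_k/\pi^{M}\mathcal{O}_k$ and $\pi^{-M}\mathcal{O}_k/\pi^{N}\mathcal{O}_k$ of equal cardinality, hence a perfect one. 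Consequently $\psi$ restricted to each $\pi^{-M}\mathcal{O}_k/\pi^N\mathcal{O}_k$ coincides with $\chi(a_M\cdot)$ for some $a_M$, and the compatible family $(a_M)_M$ pins down a single $a\in\pi^{-N}\mathcal{O}_k$ with $\psi=\chi(a\cdot)$ on $k=\bigcup_M\pi^{-M}\mathcal{O}_k$. I expect this perfect-pairing/counting step to be the technical heart of the argument.

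Once $\Phi$ is shown to be a $k$-linear bijection it is an isomorphism of vector spaces over $k$, as asserted; compatibility with the topologies is then automatic, since a finite-dimensional vector space over a local field admits a unique Hausdorff vector-space topology, so the algebraic isomorphism is simultaneously a topological one.
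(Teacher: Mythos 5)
The paper offers no proof of this statement: it is quoted directly from Weil \cite[p.40, th.3]{wei} and used as a black box, so there is no in-paper argument to compare yours against. Judged on its own, your proof is correct in outline and is the standard ``explicit'' proof of local self-duality familiar from Tate's thesis. The routine parts (well-definedness, $k$-linearity via $\langle v,av^*\rangle_V=\langle av,v^*\rangle_V$, injectivity from the non-triviality of $\chi$) are exactly right, and you correctly locate the substance in surjectivity. Your route there --- reduce to $\dim V=1$ via a basis and $(k^n)^*\cong(k^*)^n$, use the no-small-subgroups property of $U$ to make $\psi$ trivial on some $\pi^N\mathcal{O}_k$, and then realize $\psi$ on each $\pi^{-M}\mathcal{O}_k/\pi^N\mathcal{O}_k$ through a perfect pairing with $\pi^{-N}\mathcal{O}_k/\pi^M\mathcal{O}_k$ and pass to the limit --- is sound. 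Two small imprecisions: the basic character of $\mathbb{F}_p((T))$ is \emph{not} trivial exactly on $\mathcal{O}_k$ (its kernel contains $T^{-2}$); what you need, and what is true, is that $\mathcal{O}_k$ is the largest fractional ideal contained in $\mathrm{Ker}(\chi)$. And the theorem concerns an arbitrary non-trivial $\chi$, so either run the argument with the conductor $\pi^d\mathcal{O}_k$ of $\chi$ in place of $\mathcal{O}_k$, or treat the basic character first and deduce the general case from it. For comparison, Weil's own argument is softer and skips the counting: the image of $V'$ in $V^*$ is a $k$-subspace whose annihilator in $V=(V^*)^*$ is $\{0\}$ (the same computation as your injectivity step), hence the image is dense by the duality theorem for closed subgroups; being a finite-dimensional subspace of a Hausdorff topological vector space over a complete field it is also closed, hence all of $V^*$. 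Your version buys an explicit description of the characters of $\mathcal{O}_k$ and of conductors; Weil's buys brevity at the price of invoking the general Pontryagin machinery.
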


\section{Dual lattices}\label{dual}

\subsection{Definition of dual lattices of $p$-adic lattices}
Keeping the notations in Theorem \ref{duality}, let $V$ be a vector space of dimension $m>0$ over $k$. We identify $V'$ with $V^*$ by the isomorphism in Theorem \ref{duality}, and we also identify $v'$ with the corresponding $v^*$ by that isomorphism. Let $e_1,\ldots,e_m$ be a basis of $V$ over $k$. Let $e_i'\in V'$ be defined by $e_i'(e_j)=\delta_{ij}(1\leq i,j\leq m)$, where $\delta_{ij}=1$ if $i=j$ and 0 otherwise. By the knowledge of linear algebra, we know that $e_1',\ldots,e_m'$ is a basis of $V'$ over $k$. For any $v'\in V'$, $v=\sum_{i=1}^mv_ie_i\in V$, we have $[v,v']_V=v'(v)=\sum_{i=1}^mv_iv'(e_i)$. Also, we have $v'=\sum_{i=1}^mv'(e_i)e_i'$. The correspondence
$$v'\longmapsto \sum_{i=1}^mv'(e_i)e_i$$
is a $k$-isomorphism from $V'$ onto $V$. Under this isomorphism, $e_j'$ corresponds to $e_j$ for $1\leq j\leq m$. We identify $V'$ with $V$ by this isomorphism. Obviously, the correspondence
$$v=\sum_{i=1}^mv_ie_i\longmapsto(v_1,\ldots,v_m)^t$$
is a $k$-isomorphism from $V$ onto $k^m$. We also identify $V$ with $k^m$ by this isomorphism. Thus
$v'\in V'$ is identified with $(v'(e_1),\ldots,v'(e_m))^t\in k^m$.

Therefore, without loss of generality, we may simply regard $V=V'=V^*=k^m$. Write $v^t=(x_1,\ldots,x_m),v'^t=(y_1,\ldots,y_m)$, then we have
$$[v,v']_V=\sum_{i=1}^mx_iy_i.$$
Now suppose $L$ is any closed subgroup of $V$. Letting $v\in L,v^*\in L_*$, since $\langle v,v^*\rangle_V=1$, by the formula in Theorem \ref{duality},
we have $[v,v']_V\in\mbox{Ker}(\chi)$, i.e.,
$$\sum_{i=1}^mx_iy_i\in\mbox{Ker}(\chi),\forall v\in L.$$

For $k=\mathbb{R}$, we put $\chi(t)=e^{2\pi it}$ for $t\in\mathbb{R}$, here $i=\sqrt{-1}$, hence Ker$(\chi)=\mathbb{Z}$. This explains the definition of dual lattices of Euclidean lattices in Section \ref{euclidean}.

For $k=\mathbb{Q}_p$ or $k=\mathbb{F}_p((T))$, the construction of ``basic character'' $\chi$ can be found in \cite[pp.66-68]{wei}.

When $k=\mathbb{Q}_p$, call $\mathbb{Q}^{(p)}$ the set of the elements $\xi$ of $\mathbb{Q}$ such that $|\xi|_{p'}\leq1$ for all the primes $p'$ other than $p$. Clearly this is a subring of $\mathbb{Q}$, consisting of the numbers of the form $p^{-n}a$ with $n\in\mathbb{Z}_{\geq0}$ and $a\in\mathbb{Z}$. By \cite[p.64, Lemma 1]{wei}, we have $\mathbb{Q}_p=\mathbb{Q}^{(p)}+\mathbb{Z}_p$ and $\mathbb{Q}^{(p)}\cap\mathbb{Z}_p=\mathbb{Z}$. For $x\in\mathbb{Q}_p$, write $x=\xi+a$ with $\xi\in\mathbb{Q}^{(p)}$ and $a\in\mathbb{Z}_p$. Put $\chi(x)=e^{2\pi i\xi}$. We have Ker$(\chi)=\mathbb{Z}_p$.

When $k=\mathbb{F}_p((T))$, for $f=\sum f_iT^i\in k$, put $\chi(f)=\zeta^{f_{-1}}$, where $\zeta$ is a complex primitive $p$th root of unity. Hence $f\in\mbox{Ker}(\chi)$ if and only if $f_{-1}=0$.

Now we can give the definition of dual lattice of a $p$-adic lattice.

\begin{definition}
Let $\Lambda$ be a $p$-adic lattice in $\mathbb{Q}_p^m$. We define its dual lattice
$$\Lambda^*=\{y\in \mbox{span}(\Lambda):y\cdot x\in\mathbb{Z}_p,\forall x\in\Lambda\}$$
where span$(\Lambda)$ denotes the subspace spanned by all vectors in $\Lambda$ and $y\cdot x=\sum_{i=1}^my_ix_i$, for $y^t=(y_1,\ldots,y_m)$ and $x^t=(x_1,\ldots,x_m)$.
\end{definition}

\begin{definition}
Let $\Lambda$ be a $p$-adic lattice in $\mathbb{F}_p((T))^m$. We define its dual lattice
$$\Lambda^*=\{y\in \mbox{span}(\Lambda):y\cdot x\in\mathbb{F}_p[[T]],\forall x\in\Lambda\}$$
where span$(\Lambda)$ denotes the subspace spanned by all vectors in $\Lambda$ and $y\cdot x=\sum_{i=1}^my_ix_i$, for $y^t=(y_1,\ldots,y_m)$ and $x^t=(x_1,\ldots,x_m)$.
\end{definition}

For the second definition, we provide an illustration. For $y\in\mbox{span}(\Lambda)$, we have $y\in\Lambda^*$ if and only if $y\cdot x\in \mbox{Ker}(\chi),\forall x\in\Lambda$. Write
$$y\cdot x=\sum_i f_iT^i.$$
For $j<0$, since $x\in\Lambda$ implies $T^{-j-1}x\in\Lambda$, we have $y\cdot T^{-j-1}x\in\mbox{Ker}(\chi)$. But
$$y\cdot T^{-j-1}x=\sum_i f_iT^{i-j-1},$$
the coefficient of $T^{-1}$ of the above expression is $f_j$. So we have $f_j=0$ for all $j<0$. Hence $y\cdot x\in\mathbb{F}_p[[T]]$.

Thus we have a unified definition of dual lattices of $p$-adic lattices.

\begin{definition}\label{padicdual}
Let $k=\mathbb{Q}_p$ or $k=\mathbb{F}_p((T))$. Let $\Lambda$ be a $p$-adic lattice in $k^m$. We define its dual lattice
$$\Lambda^*=\{y\in \mbox{span}(\Lambda):y\cdot x\in\mathcal{O}_k,\forall x\in\Lambda\}$$
where span$(\Lambda)$ denotes the subspace spanned by all vectors in $\Lambda$ and $y\cdot x=\sum_{i=1}^my_ix_i$, for $y^t=(y_1,\ldots,y_m)$ and $x^t=(x_1,\ldots,x_m)$.
\end{definition}

\subsection{Properties of $p$-adic dual lattices}
Let $k=\mathbb{Q}_p$ or $k=\mathbb{F}_p((T))$.

\begin{definition}
For a basis $B=(b_1,\ldots,b_n)\in k^{m\times n}$, define the dual basis $D=(d_1,\ldots,d_n)\in k^{m\times n}$ as the unique basis that satisfies $span(D)= span(B)$ and $B^tD=I_n$, where $I_n$ is the identity matrix of order $n$.
\end{definition}

Notice that $B^tD=I_n$ means $b_i\cdot d_j=\delta_{ij}$ where $\delta_{ij}=1$ if $i=j$ and 0 otherwise. So the existence and the uniqueness of dual basis is clear.

\begin{proposition}\label{latticedualbasis}
If $D$ is the dual basis of $B$ then $\mathcal{L}(B)^*=\mathcal{L}(D)$.
\end{proposition}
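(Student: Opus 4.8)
The plan is to establish the two inclusions $\mathcal{L}(D)\subseteq\mathcal{L}(B)^*$ and $\mathcal{L}(B)^*\subseteq\mathcal{L}(D)$ separately, using the biorthogonality relation $b_i\cdot d_j=\delta_{ij}$ (equivalently $B^tD=I_n$) as the one computational tool throughout. A preliminary observation is that both lattices, together with the dual $\mathcal{L}(B)^*$, live inside the same $k$-subspace: since each $b_i\in\mathcal{L}(B)$ and every element of $\mathcal{L}(B)$ is an $\mathcal{O}_k$-combination of the $b_i$, one has $\mathrm{span}(\mathcal{L}(B))=\mathrm{span}(B)$, and by definition $\mathrm{span}(D)=\mathrm{span}(B)$. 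Hence $d_1,\ldots,d_n$ is a $k$-basis of $\mathrm{span}(\mathcal{L}(B))$, so any $y$ in that span can be written uniquely as $y=\sum_{j=1}^n c_j d_j$ with $c_j\in k$, and the whole question reduces to controlling these coordinates $c_j$.

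For the inclusion $\mathcal{L}(D)\subseteq\mathcal{L}(B)^*$, I would take $y=\sum_j c_j d_j\in\mathcal{L}(D)$ with all $c_j\in\mathcal{O}_k$ and an arbitrary $x=\sum_i a_i b_i\in\mathcal{L}(B)$ with all $a_i\in\mathcal{O}_k$. Expanding the pairing and applying $d_j\cdot b_i=\delta_{ij}$ collapses the double sum to $y\cdot x=\sum_j c_j a_j$, which lies in $\mathcal{O}_k$ because $\mathcal{O}_k$ is a ring. Thus $y\in\mathcal{L}(B)^*$.

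The reverse inclusion $\mathcal{L}(B)^*\subseteq\mathcal{L}(D)$ is the real content, and the point is simply to show that the a priori $k$-valued coordinates $c_j$ of a dual vector are in fact integral. Given $y\in\mathcal{L}(B)^*$, write $y=\sum_j c_j d_j$ as above. Since each generator $b_i$ lies in $\mathcal{L}(B)$ (its coefficient vector is the standard unit vector with $1$ in position $i$, whose entries lie in $\mathcal{O}_k$), the defining property of $\mathcal{L}(B)^*$ forces $y\cdot b_i\in\mathcal{O}_k$; but biorthogonality gives $y\cdot b_i=\sum_j c_j(d_j\cdot b_i)=c_i$, so $c_i\in\mathcal{O}_k$ for every $i$. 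Hence $y\in\mathcal{L}(D)$, completing the proof.

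I do not expect a serious obstacle here: once the span identification is made, the argument is essentially forced by the biorthogonality relation. The only place demanding a little care is the reverse inclusion, where the subtlety lies in the gap between ``$y$ is in the span, so its dual-basis coordinates exist in $k$'' and ``$y$ is in the dual lattice, so those coordinates are integral''. Testing $y$ against the finitely many generators $b_i$, rather than against all of $\mathcal{L}(B)$, is exactly what closes this gap and pins the coordinates down to $\mathcal{O}_k$.
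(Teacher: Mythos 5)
Your proof is correct and follows essentially the same two-inclusion argument as the paper, using biorthogonality to collapse the pairing in both directions; the only cosmetic difference is that you expand the double sum directly for the forward inclusion, whereas the paper first checks each $d_j$ lies in $\mathcal{L}(B)^*$ and then invokes the $\mathcal{O}_k$-module structure of the dual. No gaps.
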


\begin{proof}
We first show that $\mathcal{L}(D)$ is contained in $\mathcal{L}(B)^*$. Any $x\in\mathcal{L}(B)$ can be written as $x=\sum_{i=1}^n a_ib_i$ with $a_i\in\mathcal{O}_k$. Therefore, for any $j$ we have
$$x\cdot d_j=\sum_{i=1}^n a_i(b_i\cdot d_j)=\sum_{i=1}^n a_i\delta_{ij}=a_j\in\mathcal{O}_k.$$
Thus we get $D\subseteq\mathcal{L}(B)^*$. From Definition \ref{padicdual}, it is clear that $\mathcal{L}(B)^*$ is an $\mathcal{O}_k$-module in $k^m$, so we have $\mathcal{L}(D)\subseteq\mathcal{L}(B)^*$. Conversely, take any $y\in\mathcal{L}(B)^*$. Since $y\in\mbox{span}(B)=\mbox{span}(D)$, we can write $y=\sum_{i=1}^na_id_i$ with $a_i\in k$. Since $y\cdot b_j=a_j$ for all $1\leq j\leq n$, $y\in\mathcal{L}(B)^*$ implies $a_j\in\mathcal{O}_k$. Thus $y\in\mathcal{L}(D)$. Hence $\mathcal{L}(B)^*\subseteq\mathcal{L}(D)$. This completes the proof.
\end{proof}

\begin{proposition}
For any $p$-adic lattice $\Lambda$, $(\Lambda^*)^*=\Lambda$.
\end{proposition}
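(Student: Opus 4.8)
The plan is to deduce this directly from Proposition \ref{latticedualbasis} together with the uniqueness of the dual basis, so that essentially no new computation is required. Every $p$-adic lattice comes presented with a basis, so fix one and write $\Lambda=\mathcal{L}(B)$ for some $B=(b_1,\ldots,b_n)\in k^{m\times n}$. Let $D$ be the dual basis of $B$, so that $\mbox{span}(D)=\mbox{span}(B)$ and $B^tD=I_n$. Proposition \ref{latticedualbasis} then gives $\Lambda^*=\mathcal{L}(B)^*=\mathcal{L}(D)$.

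Next I would apply Proposition \ref{latticedualbasis} a second time, now to the lattice $\mathcal{L}(D)$. If $D'$ denotes the dual basis of $D$, then $(\Lambda^*)^*=\mathcal{L}(D)^*=\mathcal{L}(D')$. The heart of the argument is the observation that the dual-basis relation is symmetric: transposing $B^tD=I_n$ yields $D^tB=(B^tD)^t=I_n$, and since $\mbox{span}(B)=\mbox{span}(D)$, the matrix $B$ satisfies exactly the two conditions that characterize the dual basis of $D$ — equal span and $D^tB=I_n$. By the uniqueness clause in the definition of the dual basis, $D'=B$, and hence $(\Lambda^*)^*=\mathcal{L}(D')=\mathcal{L}(B)=\Lambda$.

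The only point that needs care is this symmetry of the defining conditions, and it is immediate once one notes that $I_n^t=I_n$ and that $\mbox{span}$ is unchanged in passing from $B$ to $D$; so there is no genuine obstacle, and the statement is really a formal corollary of the preceding proposition. One should merely confirm that the argument is independent of the chosen presentation $\Lambda=\mathcal{L}(B)$, which is automatic because both invocations of Proposition \ref{latticedualbasis} output the intrinsically defined lattices $\Lambda^*$ and $(\Lambda^*)^*$ regardless of which basis was used to describe them.
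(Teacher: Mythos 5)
Your proof is correct and follows exactly the paper's argument: fix a basis $B$ with dual basis $D$, apply Proposition \ref{latticedualbasis} twice, and use the symmetry $D^tB=(B^tD)^t=I_n$ together with uniqueness of the dual basis to conclude $(\Lambda^*)^*=\mathcal{L}(B)=\Lambda$. The only difference is that you make the transposition step and the basis-independence remark explicit, which the paper leaves implicit.
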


\begin{proof}
Suppose $B$ is a basis of $\Lambda$, i.e., $\Lambda=\mathcal{L}(B)$. Let $D$ be the dual basis of $B$, i.e., $\mbox{span}(B)=\mbox{span}(D)$ and $B^tD=I_n$. By Proposition \ref{latticedualbasis}, we have $\Lambda^*=\mathcal{L}(D)$. Since $D^tB=I_n$, $B$ is the dual basis of $D$, so by Proposition \ref{latticedualbasis}, we have $(\Lambda^*)^*=\mathcal{L}(B)$. This completes the proof.
\end{proof}

Take a lattice $\Lambda=\mathcal{L}(B)$ of rank $n$ in $k^m$, $B\in k^{m\times n}$ is its basis. We know $B^tB\in\mbox{GL}_n(k)$. Let $A\in k^{m\times n} $ be its another basis. Thus $A=BC$ for some $C\in\mbox{GL}_n(\mathcal{O}_k)$. Hence $A^tA=C^tB^tBC$. Therefore,
$$|\mbox{det}(A^tA)|_p=|\mbox{det}(B^tB)|_p,$$
i.e., $|\mbox{det}(B^tB)|_p$ is an invariant of the lattice $\Lambda$. We have the following definition.

\begin{definition}
Suppose a lattice $\Lambda=\mathcal{L}(B)$ is of rank $n$ in $k^m$, $B\in k^{m\times n}$ is its basis. Define its determinant is
$$\mbox{det}(\Lambda)=|\mbox{det}(B^tB)|_p^{1/2}.$$
\end{definition}

Thus if $\Lambda=\mathcal{L}(B)$ is of full rank, then $\mbox{det}(\Lambda)=|\mbox{det}(B)|_p$.

\begin{proposition}\label{dualdet}
For any $p$-adic lattice $\Lambda$, $\mbox{det}(\Lambda^*)=1/\mbox{det}(\Lambda)$.
\end{proposition}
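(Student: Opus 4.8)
The plan is to reduce everything to the relationship between a lattice basis and its dual basis, then compute the determinant of each directly from the defining matrix identity. The key structural fact, already established in Proposition \ref{latticedualbasis}, is that if $B\in k^{m\times n}$ is a basis of $\Lambda$ and $D$ is its dual basis, then $\Lambda^*=\mathcal{L}(D)$. So I would begin by fixing such a $B$ and $D$ satisfying $\mathrm{span}(D)=\mathrm{span}(B)$ and $B^tD=I_n$. By the definition of the lattice determinant, $\mathrm{det}(\Lambda)=|\mathrm{det}(B^tB)|_p^{1/2}$ and $\mathrm{det}(\Lambda^*)=|\mathrm{det}(D^tD)|_p^{1/2}$, so the goal is precisely to show
$$|\mathrm{det}(B^tB)|_p\cdot|\mathrm{det}(D^tD)|_p=1.$$

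First I would handle the full-rank case $n=m$, where the argument is cleanest. Here $B$ and $D$ are square invertible matrices and $B^tD=I_n$ forces $D=(B^t)^{-1}$, so $\mathrm{det}(D)=\mathrm{det}(B)^{-1}$ and thus $|\mathrm{det}(D)|_p=|\mathrm{det}(B)|_p^{-1}$; since in full rank $\mathrm{det}(\Lambda)=|\mathrm{det}(B)|_p$, the claim follows immediately. The work is really in the general case $n\leq m$, where $B$ and $D$ are not square. The natural device is to pass to the common $n$-dimensional space $W=\mathrm{span}(B)=\mathrm{span}(D)$ and express both bases in terms of a fixed basis of $W$. Concretely, I would pick any $P\in k^{m\times n}$ whose columns form a basis of $W$ and write $B=PU$ and $D=PV$ for some $U,V\in\mathrm{GL}_n(k)$. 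Then $B^tD=U^t(P^tP)V=I_n$, which gives the matrix relation $V=(P^tP)^{-1}(U^t)^{-1}$.

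The computation then proceeds by substituting these expressions. We have $B^tB=U^t(P^tP)U$ and $D^tD=V^t(P^tP)V$, so
$$\mathrm{det}(B^tB)\cdot\mathrm{det}(D^tD)=\mathrm{det}(U)^2\,\mathrm{det}(V)^2\,\mathrm{det}(P^tP)^2.$$
Using $V=(P^tP)^{-1}(U^t)^{-1}$ we get $\mathrm{det}(V)=\mathrm{det}(P^tP)^{-1}\mathrm{det}(U)^{-1}$, and substituting this in shows the product equals $1$. Taking $p$-adic absolute values and then square roots yields $\mathrm{det}(\Lambda^*)=1/\mathrm{det}(\Lambda)$.

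The main obstacle I anticipate is not any single hard step but rather making sure the intermediate matrices $P^tP$, $U$, and $V$ are genuinely invertible over $k$ so that all the determinant manipulations are legitimate. The invertibility of $P^tP$ is guaranteed by the earlier observation in the text that $B^tB\in\mathrm{GL}_n(k)$ for any lattice basis (applied to $P$), and $U,V\in\mathrm{GL}_n(k)$ because they are change-of-basis matrices within $W$. Once these invertibility facts are recorded, the determinant algebra is routine and purely formal, valid uniformly for both $k=\mathbb{Q}_p$ and $k=\mathbb{F}_p((T))$ since it uses only multiplicativity of the determinant and of $|\cdot|_p$. An even slicker alternative, which I would mention if space permits, is to avoid choosing $P$ altogether: form the $n\times n$ matrix $G=B^tB$ (the Gram matrix) and note $D^tB=I_n$ implies $D^tBG^{-1}=G^{-1}$, from which one can relate $D^tD$ to $G^{-1}$ directly and conclude $\mathrm{det}(D^tD)=\mathrm{det}(G)^{-1}$ in one line.
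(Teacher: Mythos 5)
Your argument is correct, and its core ingredients (Proposition \ref{latticedualbasis}, the Gram-matrix definition of the determinant, multiplicativity of $\det$ and of $|\cdot|_p$) are the same as the paper's, but your main computation takes a slightly different route. The paper simply observes that the dual basis has the closed form $D=B(B^tB)^{-1}$ (it spans the right space and satisfies $B^tD=I_n$, so it is the dual basis by uniqueness), and then computes in one line $D^tD=((B^tB)^{-1})^tB^tB(B^tB)^{-1}=((B^tB)^{-1})^t$, whence $|\det(D^tD)|_p=|\det(B^tB)|_p^{-1}$. Your main argument instead factors both $B$ and $D$ through an auxiliary basis $P$ of the common span as $B=PU$, $D=PV$ and eliminates $V$ via $U^t(P^tP)V=I_n$; this costs an extra choice and an extra invertibility check ($P^tP\in\mathrm{GL}_n(k)$, which rests on the same unproved assertion the paper itself makes about Gram matrices of bases), but it makes visible that the quantity $|\det(B^tB)|_p\cdot|\det(D^tD)|_p$ is independent of all choices. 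The ``slicker alternative'' you sketch at the end --- relating $D^tD$ directly to $G^{-1}$ with $G=B^tB$ --- is exactly the paper's proof, so you have in effect found both arguments; note only that in that version one should either use the symmetry $G^t=G$ or keep the transpose, writing $D^tD=(G^{-1})^t$, before taking determinants.
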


\begin{proof}
Suppose $\Lambda=\mathcal{L}(B)$ and $D$ is the dual basis of $B$. By Proposition \ref{latticedualbasis}, we have $\Lambda^*=\mathcal{L}(D)$. Since $B^tB(B^tB)^{-1}=I$, we have $D=B(B^tB)^{-1}$. Thus
$$\mbox{det}(\Lambda^*)=\sqrt{|\mbox{det}(D^tD)|_p}=\sqrt{|\mbox{det}(((B^tB)^{-1})^tB^tB(B^tB)^{-1})|_p}$$
$$=\sqrt{|\mbox{det}(((B^tB)^{-1})^t)|_p}=\frac{1}{\sqrt{|\mbox{det}(B^tB)|_p}}=\frac{1}{\mbox{det}(\Lambda)}.$$
\end{proof}

\begin{definition}
For $v=(v_1,\ldots,v_m)^t\in k^m$, define $M(v)=\max_{1\leq i\leq m}|v_i|_p$. Obviously, $M$ is a norm on $k^m$.
\end{definition}

We need the following lemma.

\begin{lemma}\cite[p.91, th.]{rob}\label{normequiv}
Let $N$ be a norm on $k^m$. Then the two norms $M$ and $N$ are equivalent, i.e., there are two positive constants $c_1,c_2$ such that
$$c_1N(v)\leq M(v)\leq c_2N(v),\forall v\in k^m.$$
\end{lemma}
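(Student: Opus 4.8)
The plan is to prove the two inequalities separately. The left-hand inequality $c_1 N(v)\le M(v)$ is elementary and comes straight from the norm axioms, whereas the right-hand inequality $M(v)\le c_2 N(v)$ is the substantial one and will be obtained from a compactness argument combined with the discreteness of the value group of $k$. For the easy direction, let $e_1,\ldots,e_m$ be the standard basis of $k^m$ and write $v=\sum_{i=1}^m v_ie_i$. Axiom (iii) (the ultrametric inequality for $N$) together with axiom (ii) gives
$$N(v)\le\max_{1\le i\le m}N(v_ie_i)=\max_{1\le i\le m}|v_i|_p\,N(e_i)\le\Big(\max_{1\le i\le m}N(e_i)\Big)M(v).$$
Setting $C=\max_iN(e_i)>0$ (positive since each $e_i\ne0$), this reads $N(v)\le C\,M(v)$, i.e. $c_1N(v)\le M(v)$ with $c_1=1/C$. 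A useful by-product is that $N$ is continuous for the topology of $M$: if $M(v_n-v)\to0$ then $N(v_n-v)\le C\,M(v_n-v)\to0$, and the ultrametric estimates $N(v_n)\le\max(N(v),N(v_n-v))$ and $N(v)\le\max(N(v_n),N(v_n-v))$ force $N(v_n)=N(v)$ for all large $n$ (and the case $v=0$ is immediate).

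For the hard direction I would exploit compactness. Let $\pi$ be a uniformizer of $k$ (so $\pi=p$ or $\pi=T$), with $|\pi|_p=p^{-1}$. The closed ball $\mathcal{O}_k^m=\{v:M(v)\le1\}$ is compact, being a finite product of copies of the compact ring $\mathcal{O}_k$, and the unit sphere $S=\{v:M(v)=1\}$ is the complement in $\mathcal{O}_k^m$ of the open set $(\pi\mathcal{O}_k)^m=\{v:M(v)<1\}$, hence is closed in a compact set and therefore compact. Since $N$ is continuous and strictly positive on $S$ (note $0\notin S$), it attains a minimum $c:=\min_{v\in S}N(v)>0$. Finally, for any nonzero $v\in k^m$ the value $M(v)$ is a power of $p$, say $M(v)=p^{-t}$ with $t\in\mathbb{Z}$; then $M(\pi^{-t}v)=|\pi|_p^{-t}M(v)=1$, so $\pi^{-t}v\in S$ and $N(\pi^{-t}v)\ge c$. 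By axiom (ii) this gives $p^{t}N(v)\ge c$, that is $N(v)\ge c\,p^{-t}=c\,M(v)$, which is the right-hand inequality with $c_2=1/c$.

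The main obstacle is the right-hand inequality, and within it the two points requiring care are the continuity of $N$ (needed so that it attains its infimum on the compact sphere $S$) and the strict positivity of that infimum. Continuity is supplied for free by the easy direction, and strict positivity is immediate from axiom (i) because the minimum is attained at a nonzero point. The essential non-archimedean input is the discreteness of the value group $|k^{\times}|_p=p^{\mathbb{Z}}$: since $M$ takes values in $p^{\mathbb{Z}}\cup\{0\}$, exactly the range of $|\cdot|_p$ on $k$, every nonzero vector can be rescaled onto the unit sphere by a single power of the uniformizer $\pi$, and this is what lets the lower bound on $S$ propagate to all of $k^m$.
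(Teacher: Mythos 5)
Your proof is correct. Note first that the paper itself does not prove this lemma at all: it is quoted from Robert's book (p.~91) for $k=\mathbb{Q}_p$, with only the remark that ``it is easy to see that the lemma also holds for $k=\mathbb{F}_p((T))$.'' So there is no in-paper argument to compare against; what you have supplied is a self-contained substitute for the citation. Your two halves are both sound: the bound $N(v)\leq C\,M(v)$ with $C=\max_i N(e_i)$ follows from axioms (ii) and (iii) exactly as you say (and this is also the computation the author redoes explicitly in the algorithmic Theorem of Section 4); the reverse bound via compactness of the $M$-unit sphere $S=\mathcal{O}_k^m\setminus(\pi\mathcal{O}_k)^m$, continuity of $N$ for the $M$-topology (which you correctly extract from the easy direction and the isosceles-triangle property), positivity of the attained minimum, and the rescaling of an arbitrary nonzero vector onto $S$ by a power of the uniformizer, is the standard argument and is watertight. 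One genuine merit of your route is that it treats $\mathbb{Q}_p$ and $\mathbb{F}_p((T))$ uniformly — the only inputs are local compactness of $k$ and discreteness of the value group $p^{\mathbb{Z}}$, both of which hold in either case — so it actually substantiates the paper's unproved claim that the lemma transfers to $\mathbb{F}_p((T))$. The price is that the compactness argument is tied to locally compact $k$, whereas proofs of norm equivalence by induction on dimension using only completeness of $k$ (as in some textbook treatments) apply to arbitrary complete nonarchimedean base fields; for the two fields considered in this paper that extra generality is not needed.
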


Notice that, in \cite{rob}, the above lemma is proved only for $k=\mathbb{Q}_p$, but it is easy to see that the lemma also holds for $k=\mathbb{F}_p((T))$.

\begin{theorem}\label{padicminkowski}
(1) Suppose a lattice $\Lambda=\mathcal{L}(B)$ is of rank $n$ in $k^m$, $B\in k^{m\times n}$ is its basis. Then we have
$$\lambda_1\geq\left(\mbox{det}(\Lambda)\right)^{\frac{1}{n}},$$
where $\lambda_1$ is the biggest norm of all the vectors in $\Lambda$ with respect to the norm $M$.

(2) Suppose a lattice $\Lambda=\mathcal{L}(B)$ is of rank $n$ in $k^m$, $B\in k^{m\times n}$ is its basis. Let $N$ be a norm on $k^m$. Then we have
$$\lambda_1\geq c\cdot\left(\mbox{det}(\Lambda)\right)^{\frac{1}{n}},$$
where $\lambda_1$ is the biggest norm of all the vectors in $\Lambda$ with respect to the norm $N$ and $c$ is a positive constant depending on $N$.
\end{theorem}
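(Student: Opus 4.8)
The plan is to bound $\det(\Lambda)$ from above by $\lambda_1^n$ and then take $n$-th roots. For part (1), I would work directly with the given basis $B=(b_1,\ldots,b_n)$; an $M$-orthogonal basis from Theorem \ref{latticeorthobasis} could also be used, but is not actually needed. The two ingredients are: first, that $\det(\Lambda)\le\prod_{i=1}^n M(b_i)$; and second, that each column $b_i$ lies in $\Lambda$, so $M(b_i)\le\lambda_1$ and hence $\prod_{i=1}^n M(b_i)\le\lambda_1^n$. Combining these gives $\det(\Lambda)\le\lambda_1^n$, i.e. $(\det(\Lambda))^{1/n}\le\lambda_1$.

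The heart of the argument is the first inequality, which I would prove by a normalization (scaling) trick. Let $\pi$ be a uniformizer of $k$. For each $i$, choose $\gamma_i\in k^\times$ with $|\gamma_i|_p=M(b_i)$ (possible since $M(b_i)$ is an integral power of $p$), and set $\beta_i=\gamma_i^{-1}b_i$, so that $M(\beta_i)=1$ and therefore $\beta_i\in\mathcal{O}_k^m$. Writing $B_0$ for the matrix with columns $\beta_i$ and $\Gamma=\mathrm{diag}(\gamma_1,\ldots,\gamma_n)$, we have $B=B_0\Gamma$ and $B^tB=\Gamma\,B_0^tB_0\,\Gamma$, whence $\det(B^tB)=(\prod_i\gamma_i)^2\det(B_0^tB_0)$. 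Since $B_0\in\mathcal{O}_k^{m\times n}$, the matrix $B_0^tB_0$ lies in $\mathcal{O}_k^{n\times n}$, so $|\det(B_0^tB_0)|_p\le1$. Taking $p$-adic absolute values and square roots gives $\det(\Lambda)=|\det(B^tB)|_p^{1/2}=\prod_i M(b_i)\cdot|\det(B_0^tB_0)|_p^{1/2}\le\prod_i M(b_i)$. I would emphasize the one genuinely subtle point here: in the non-full-rank case the naive guess $\det(\Lambda)=\prod_i M(b_i)$ can fail, since the Gram determinant $\det(B_0^tB_0)$ may be divisible by $\pi$ (for instance when a normalized column is isotropic for the form $x\cdot y$ modulo $\pi$), but it fails only in the direction $\le$, which is exactly what the Minkowski-type inequality requires.

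For part (2), I would deduce the statement from part (1) and the equivalence of norms in Lemma \ref{normequiv}. Let $c_1,c_2>0$ satisfy $c_1 N(v)\le M(v)\le c_2 N(v)$ for all $v$, and write $\lambda_1^{(M)}$ for the maximal $M$-norm on $\Lambda$. Choose $v^*\in\Lambda$ attaining this maximum, so $M(v^*)=\lambda_1^{(M)}\ge(\det(\Lambda))^{1/n}$ by part (1). Then the maximal $N$-norm satisfies $\lambda_1\ge N(v^*)\ge M(v^*)/c_2\ge(\det(\Lambda))^{1/n}/c_2$, so $c=1/c_2$ works; note that $\det(\Lambda)$ is defined via $B^tB$ and does not depend on the choice of norm. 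The only mild obstacle in the whole argument is the non-full-rank Gram-determinant bookkeeping, but once the normalization places $B_0$ in $\mathcal{O}_k^{m\times n}$, integrality of $B_0^tB_0$ makes the key inequality immediate.
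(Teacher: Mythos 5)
Your proof is correct; the skeleton (bound $|\det(B^tB)|_p$ by $\lambda_1^{2n}$, take square roots and $n$-th roots) is the same as the paper's, but the way you obtain the key bound is genuinely different. The paper estimates every Gram entry directly, $|b_i\cdot b_j|_p\le M(b_i)M(b_j)\le\lambda_1^2$, and then uses the ultrametric inequality on the $n!$-term expansion of $\det(B^tB)$ to get $\det(\Lambda)\le\lambda_1^n$ in one stroke; for part (2) it simply reruns the identical computation with $M(b_i)\le c_2N(b_i)$, arriving at the same constant $c=1/c_2$. You instead factor $B=B_0\Gamma$ with unit-$M$-norm columns and invoke integrality of $B_0^tB_0$ to isolate the sharper intermediate Hadamard-type inequality $\det(\Lambda)\le\prod_{i=1}^nM(b_i)$, and you deduce (2) from (1) by evaluating the norm equivalence at a vector attaining the $M$-maximum rather than redoing the Gram estimate. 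Both arguments rest on the same ultrametric fact (your integrality claim $|\det(B_0^tB_0)|_p\le1$ is exactly the paper's determinant-expansion bound applied to the normalized matrix), so neither is more general; what yours buys is a clean, reusable statement $\det(\Lambda)\le\prod_iM(b_i)$ that is essentially the content of the paper's later Theorem \ref{padicminkowsecond}, plus the observation that (2) for an arbitrary norm is a formal consequence of (1), while the paper's version is marginally shorter and avoids choosing the scalars $\gamma_i$. Your caveat about the non-full-rank case (that $\det(B_0^tB_0)$ may fail to be a unit, so only the inequality $\le$ survives) is accurate and worth having made explicit.
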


\begin{proof}
Suppose $B=(b_1,\ldots,b_n)\in k^{m\times n}$. Then $B^tB$ is an $n\times n$-matrix, whose $(i,j)$-element is $b_i\cdot b_j$. Obviously, we have
$$|b_i\cdot b_j|_p\leq M(b_i)\cdot M(b_j)\leq\lambda_1^2.$$
Thus
$$\mbox{det}(\Lambda)=|\mbox{det}(B^tB)|_p^{1/2}\leq\lambda_1^n.$$
This proves (1). Below, we prove (2).

By Lemma \ref{normequiv}, two norms $M$ and $N$ are equivalent, i.e., there are two positive constants $c_1,c_2$ such that
$$c_1N(v)\leq M(v)\leq c_2N(v),\forall v\in k^m.$$
Suppose $B=(b_1,\ldots,b_n)\in k^{m\times n}$. Then $B^tB$ is an $n\times n$-matrix, whose $(i,j)$-element is $b_i\cdot b_j$. Obviously, we have
$$|b_i\cdot b_j|_p\leq M(b_i)\cdot M(b_j)\leq c_2^2N(b_i)N(b_j)\leq c_2^2\lambda_1^2.$$
Thus
$$\mbox{det}(\Lambda)=|\mbox{det}(B^tB)|_p^{1/2}\leq c_2^n\cdot\lambda_1^n.$$
One can put $c=1/c_2$.
\end{proof}

\textbf{Remark.} Theorem \ref{padicminkowski} can be viewed as a $p$-adic analogue of Minkowski's first theorem for Euclidean lattices, see \cite{reg,sie,cas}.

\begin{theorem}\label{1padictransfer}
(1) Suppose $\Lambda$ is any $p$-adic lattice of rank $n$ in $k^m$, $\Lambda^*$ is its dual. Then we have
$$\lambda_1(\Lambda)\cdot\lambda_1(\Lambda^*)\geq1,$$
where $\lambda_1(\Lambda)$, $\lambda_1(\Lambda^*)$ is the biggest norm of all the vectors in $\Lambda$, $\Lambda^*$ with respect to the norm $M$, respectively.

(2) Suppose $\Lambda$ is any $p$-adic lattice of rank $n$ in $k^m$, $\Lambda^*$ is its dual. Let $N$ be a norm on $k^m$. Then we have
$$\lambda_1(\Lambda)\cdot\lambda_1(\Lambda^*)\geq c^2,$$
where $\lambda_1(\Lambda)$, $\lambda_1(\Lambda^*)$ is the biggest norm of all the vectors in $\Lambda$, $\Lambda^*$ with respect to the norm $N$, respectively, and the positive constant $c$ is the same as in Theorem \ref{padicminkowski} (2).
\end{theorem}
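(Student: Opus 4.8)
The plan is to combine the $p$-adic analogue of Minkowski's first theorem (Theorem \ref{padicminkowski}) with the determinant duality relation established in Proposition \ref{dualdet}. The crucial observation is that both $\lambda_1(\Lambda)$ and $\lambda_1(\Lambda^*)$ admit lower bounds in terms of the respective lattice determinants, and since $\det(\Lambda^*)=1/\det(\Lambda)$, the determinant dependence cancels exactly when the two bounds are multiplied together. In other words, the transference inequality is not a new phenomenon but a direct consequence of the two facts already proved.

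For part (1), I would first apply Theorem \ref{padicminkowski}(1) to $\Lambda$ itself to get
$$\lambda_1(\Lambda)\geq\left(\mbox{det}(\Lambda)\right)^{1/n}.$$
Since $\Lambda^*$ is again a $p$-adic lattice of rank $n$ in $k^m$, the same theorem applies to it and yields
$$\lambda_1(\Lambda^*)\geq\left(\mbox{det}(\Lambda^*)\right)^{1/n}.$$
Substituting the relation $\mbox{det}(\Lambda^*)=1/\mbox{det}(\Lambda)$ from Proposition \ref{dualdet} and multiplying the two inequalities, the factors $\left(\mbox{det}(\Lambda)\right)^{1/n}$ and $\left(\mbox{det}(\Lambda)\right)^{-1/n}$ cancel, giving $\lambda_1(\Lambda)\cdot\lambda_1(\Lambda^*)\geq1$.

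For part (2), the argument runs in parallel with the general norm $N$. Applying Theorem \ref{padicminkowski}(2) to $\Lambda$ and to $\Lambda^*$ gives $\lambda_1(\Lambda)\geq c\left(\mbox{det}(\Lambda)\right)^{1/n}$ and $\lambda_1(\Lambda^*)\geq c\left(\mbox{det}(\Lambda^*)\right)^{1/n}$ with one and the same constant $c$, because $c=1/c_2$ is determined only by $N$ through its equivalence with $M$ in Lemma \ref{normequiv} and is independent of the particular lattice. Multiplying these and once more invoking $\mbox{det}(\Lambda^*)=1/\mbox{det}(\Lambda)$ produces the factor $c^2$ while cancelling the determinants, which is exactly the claimed bound.

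The proof presents essentially no genuine obstacle, since all the difficulty has already been absorbed into Theorem \ref{padicminkowski} and Proposition \ref{dualdet}. The single point that merits a moment's attention is the verification that the constant $c$ may be taken identical for $\Lambda$ and for $\Lambda^*$; this is immediate once one notes that $c$ arises purely from the comparison of $N$ with the reference norm $M$ and does not see the lattice at all.
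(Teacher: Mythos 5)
Your proposal is correct and is exactly the paper's argument: the paper's proof is the single line ``This follows from Theorem \ref{padicminkowski} and Proposition \ref{dualdet},'' which is precisely the multiplication-and-cancellation of determinants you spell out. Your added remark that $c$ depends only on $N$ (via Lemma \ref{normequiv}) and not on the lattice is the right detail to make the one-liner airtight.
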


\begin{proof}
This follows from Theorem \ref{padicminkowski} and Proposition \ref{dualdet}.
\end{proof}

\begin{theorem}\label{padictransfer}
(1) Suppose $\Lambda$ is any $p$-adic lattice of rank $n$ in $k^m$, $\Lambda^*$ is its dual. Then we have
$$\overline{\lambda_1}(\Lambda)\cdot\overline{\lambda_n}(\Lambda^*)\geq1,$$
where $\overline{\lambda_1}(\Lambda)=\lambda_1(\Lambda)$ is the first successive maxima of the lattice $\Lambda$ and $\overline{\lambda_n}(\Lambda^*)$ is the $n$-th successive maxima of the lattice $\Lambda^*$ with respect to the norm $M$.

(2) Suppose $\Lambda$ is any $p$-adic lattice of rank $n$ in $k^m$, $\Lambda^*$ is its dual. Let $N$ be a norm on $k^m$. Then we have
$$\overline{\lambda_1}(\Lambda)\cdot\overline{\lambda_n}(\Lambda^*)\geq c',$$
where $\overline{\lambda_1}(\Lambda)=\lambda_1(\Lambda)$ is the first successive maxima of the lattice $\Lambda$ and $\overline{\lambda_n}(\Lambda^*)$ is the $n$-th successive maxima of the lattice $\Lambda^*$ with respect to the norm $N$ and $c'$ is a positive constant depending on $N$.
\end{theorem}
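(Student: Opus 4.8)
The plan is to exploit the biduality $(\Lambda^*)^*=\Lambda$ together with the dual-basis description of $\Lambda^*$ from Proposition \ref{latticedualbasis}, reducing the whole statement to the single nondegenerate pairing between a basis vector and its dual. Note first that the already-established Theorem \ref{1padictransfer} does not suffice: it controls $\overline{\lambda_1}(\Lambda)\cdot\overline{\lambda_1}(\Lambda^*)$, whereas here we must bound $\overline{\lambda_1}(\Lambda)\cdot\overline{\lambda_n}(\Lambda^*)$, and since $\overline{\lambda_n}(\Lambda^*)\le\overline{\lambda_1}(\Lambda^*)$ the present claim is strictly stronger. The key idea is therefore to realize the smallest successive maximum $\overline{\lambda_n}(\Lambda^*)$ by a concrete vector of $\Lambda^*$ and to pair it with a companion vector of $\Lambda$ produced by duality.

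For part (1) I would proceed as follows. By Theorem \ref{latticeorthobasis} choose an $M$-orthogonal basis $(\delta_1,\ldots,\delta_n)$ of $\Lambda^*$, ordered so that $M(\delta_1)\ge\cdots\ge M(\delta_n)$; by the definition of successive maxima this gives $M(\delta_n)=\overline{\lambda_n}(\Lambda^*)$. Let $(\alpha_1,\ldots,\alpha_n)$ be the dual basis of $(\delta_1,\ldots,\delta_n)$, so $\alpha_i\cdot\delta_j=\delta_{ij}$. By Proposition \ref{latticedualbasis} together with biduality, $(\alpha_1,\ldots,\alpha_n)$ is a basis of $(\Lambda^*)^*=\Lambda$; in particular $\alpha_n\in\Lambda$, so $M(\alpha_n)\le\overline{\lambda_1}(\Lambda)$ because $\overline{\lambda_1}(\Lambda)$ is the largest norm attained on $\Lambda$. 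The ultrametric inequality for $M$ gives $|x\cdot y|_p\le M(x)M(y)$, whence from $\alpha_n\cdot\delta_n=1$ we obtain $1=|\alpha_n\cdot\delta_n|_p\le M(\alpha_n)M(\delta_n)\le\overline{\lambda_1}(\Lambda)\cdot\overline{\lambda_n}(\Lambda^*)$, which is exactly (1).

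For part (2) I would keep the very same construction but carry it out with respect to $N$: take an $N$-orthogonal basis $(\delta_1,\ldots,\delta_n)$ of $\Lambda^*$ with $N(\delta_n)=\overline{\lambda_n}(\Lambda^*)$, form its dual basis $(\alpha_1,\ldots,\alpha_n)$ generating $\Lambda$, and again use $\alpha_n\cdot\delta_n=1$. The sub-multiplicativity $1=|\alpha_n\cdot\delta_n|_p\le M(\alpha_n)M(\delta_n)$ is stated for $M$, so I would invoke the norm-equivalence Lemma \ref{normequiv}, with $c_1N\le M\le c_2N$, to pass to $N$: from $M(\alpha_n)\le c_2N(\alpha_n)\le c_2\,\overline{\lambda_1}(\Lambda)$ and $M(\delta_n)\le c_2N(\delta_n)=c_2\,\overline{\lambda_n}(\Lambda^*)$ one gets $1\le c_2^{2}\,\overline{\lambda_1}(\Lambda)\cdot\overline{\lambda_n}(\Lambda^*)$, so the claim holds with $c'=1/c_2^{2}$ (which is the square of the constant $c$ of Theorem \ref{padicminkowski} (2)).

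The proof is essentially mechanical once the right vectors are in hand, so the main obstacle is conceptual rather than computational: one must resist trying to deduce the result from Theorem \ref{1padictransfer} or from the determinant identity in Proposition \ref{dualdet}, and instead notice that the correct move is to start on the dual side, so that $\overline{\lambda_n}(\Lambda^*)$ is attained by an explicit basis vector $\delta_n$, and then pull back a partner $\alpha_n\in\Lambda$ through the dual basis. The only two points needing care are that $\overline{\lambda_1}(\Lambda)$ equals the maximum of $M$ (resp.\ $N$) over all of $\Lambda$, already observed in the proof of the uniqueness theorem for successive maxima, and that the dual basis of $(\delta_i)$ generates $\Lambda$ by biduality; both are supplied by the preceding results.
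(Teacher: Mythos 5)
Your proposal is correct and follows essentially the same route as the paper: choose an $M$- (resp.\ $N$-) orthogonal basis of $\Lambda^*$ so that $\overline{\lambda_n}(\Lambda^*)$ is realized by its last vector, pass to the dual basis (a basis of $\Lambda$ by Proposition \ref{latticedualbasis} and biduality), and use $1=|b_n\cdot d_n|_p\le M(b_n)M(d_n)$ together with the norm equivalence of Lemma \ref{normequiv} to get $c'=1/c_2^2$. The paper phrases this as choosing the dual basis $D$ of $B$ to be orthogonal for $\Lambda^*$, which is exactly your construction read in the other direction.
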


\begin{proof}
Suppose $\Lambda=\mathcal{L}(B)$ and $D=(d_1,\ldots,d_n)$ is the dual basis of $B=(b_1,\ldots,b_n)$. By Proposition \ref{latticedualbasis}, we have $\Lambda^*=\mathcal{L}(D)$. We can choose $d_1,\ldots,d_n$ is an $M$-orthogonal basis of $\Lambda^*$ by Theorem \ref{latticeorthobasis}. Without loss of generality, we may assume $M(d_1)\geq\cdots\geq M(d_n)$, i.e., $M(d_n)=\overline{\lambda_n}(\Lambda^*)$. By the definition of dual basis, we have $b_n\cdot d_n=1$. Thus, we have
$$1=|b_n\cdot d_n|_p\leq M(b_n)\cdot M(d_n)\leq \overline{\lambda_1}(\Lambda)\cdot\overline{\lambda_n}(\Lambda^*).$$
This proves (1). Below, we prove (2).

Suppose $\Lambda=\mathcal{L}(B)$ and $D=(d_1,\ldots,d_n)$ is the dual basis of $B=(b_1,\ldots,b_n)$. By Proposition \ref{latticedualbasis}, we have $\Lambda^*=\mathcal{L}(D)$. We can choose $d_1,\ldots,d_n$ is an $N$-orthogonal basis of $\Lambda^*$ by Theorem \ref{latticeorthobasis}. Without loss of generality, we may assume $N(d_1)\geq\cdots\geq N(d_n)$, i.e., $N(d_n)=\overline{\lambda_n}(\Lambda^*)$. By the definition of dual basis, we have $b_n\cdot d_n=1$. By Lemma \ref{normequiv}, two norms $M$ and $N$ are equivalent, i.e., there are two positive constants $c_1,c_2$ such that
$$c_1N(v)\leq M(v)\leq c_2N(v),\forall v\in k^m.$$ Thus, we have
$$1=|b_n\cdot d_n|_p\leq M(b_n)\cdot M(d_n)\leq c_2^2\cdot N(b_n)\cdot N(d_n)\leq c_2^2\cdot\overline{\lambda_1}(\Lambda)\cdot\overline{\lambda_n}(\Lambda^*).$$
One can put $c'=1/c_2^2$.
\end{proof}

\textbf{Remark.} Theorems \ref{1padictransfer} and \ref{padictransfer} can be viewed as $p$-adic analogues of transference theorems for Euclidean lattices, see \cite{reg}.

\textbf{Example.} This example is from \cite{zdw} section 3. Let $K=\mathbb{Q}_2(\zeta)$ where $\zeta$ is a primitive 5th root of unity. $K$ is an unramified extension of $\mathbb{Q}_2$ of degree 4. Let $N$ be the 2-adic absolute value on $K$. Every element $a$ of $K$ can be written as a unique form $a=a_0+a_1\zeta+a_2\zeta^2+a_3\zeta^3$ with $a_i\in\mathbb{Q}_2(0\leq i\leq3)$. The correspondence $a\longmapsto(a_0,a_1,a_2,a_3)^t$ is a $\mathbb{Q}_2$-linear isomorphism from $K$ onto $\mathbb{Q}_2^4$. Let $\Lambda=\mathcal{L}(1,2\zeta,16\zeta^2+16\zeta^3)$ be a lattice in $K$ of rank 3. We know from \cite{zdw} that $1,2\zeta,16\zeta^2+16\zeta^3$ is an $N$-orthogonal basis of $\Lambda$. With respect to $N$, we have $\overline{\lambda_1}(\Lambda)=1,\overline{\lambda_2}(\Lambda)=1/2,\overline{\lambda_3}(\Lambda)=1/16$. In $\mathbb{Q}_2^4$, the basis matrix $B$ corresponding to $1,2\zeta,16\zeta^2+16\zeta^3$ is
$$B=\left(\begin{array}{lcr}
1&0&0\\
0&2&0\\
0&0&16\\
0&0&16
\end{array}\right).$$
The dual of $B$ is $D=B(B^tB)^{-1}$, one may compute
$$D=\left(\begin{array}{lcr}
1&0&0\\
0&1/2&0\\
0&0&1/32\\
0&0&1/32
\end{array}\right).$$
That is, the dual lattice of $\Lambda$ is $\Lambda^*=\mathcal{L}(1,\frac{1}{2}\zeta,\frac{1}{32}\zeta^2+\frac{1}{32}\zeta^3)$. It is easy to see that $1,\frac{1}{2}\zeta,\frac{1}{32}\zeta^2+\frac{1}{32}\zeta^3$ is an $N$-orthogonal basis of $\Lambda^*$. Thus, with respect to $N$, we have $\overline{\lambda_1}(\Lambda^*)=32,\overline{\lambda_2}(\Lambda^*)=2,\overline{\lambda_3}(\Lambda^*)=1$. We have $\overline{\lambda_1}(\Lambda)\overline{\lambda_3}(\Lambda^*)=1$ and $\overline{\lambda_1}(\Lambda^*)\overline{\lambda_3}(\Lambda)=2$.

\begin{theorem}\label{padicminkowsecond}
(1) Suppose $\Lambda$ is any $p$-adic lattice of rank $n$ in $k^m$. Then we have
$$\prod_{i=1}^n\overline{\lambda_i}(\Lambda)\geq\mbox{det}(\Lambda),$$
where $\overline{\lambda_i}(\Lambda)(1\leq i\leq n)$ are the successive maxima of the lattice $\Lambda$ with respect to the norm $M$.

(2) Suppose $\Lambda$ is any $p$-adic lattice of rank $n$ in $k^m$. Let $N$ be a norm on $k^m$. Then we have
$$\prod_{i=1}^n\overline{\lambda_i}(\Lambda)\geq c''\cdot\mbox{det}(\Lambda),$$
where $\overline{\lambda_i}(\Lambda)(1\leq i\leq n)$ are the successive maxima of the lattice $\Lambda$ with respect to the norm $N$, and $c''$ is a positive constant depending on $N$.
\end{theorem}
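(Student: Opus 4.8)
The plan is to mimic the proof of Theorem \ref{padicminkowski}, but to replace the single bound $\lambda_1$ by the individual successive maxima; this forces us to work with a norm-orthogonal basis rather than an arbitrary one. First I would invoke Theorem \ref{latticeorthobasis} to produce an $M$-orthogonal basis $\alpha_1,\ldots,\alpha_n$ of $\Lambda$, and order it by decreasing norm, so that by the definition of successive maxima $M(\alpha_i)=\overline{\lambda_i}(\Lambda)$. Writing $A=(\alpha_1,\ldots,\alpha_n)$ we have $\Lambda=\mathcal{L}(A)$ and hence $\det(\Lambda)=|\det(A^tA)|_p^{1/2}$, where the $(i,j)$-entry of the Gram matrix $A^tA$ is $\alpha_i\cdot\alpha_j$.

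The heart of part (1) is an ultrametric bound on this Gram determinant. Since $|\alpha_i\cdot\alpha_j|_p\leq M(\alpha_i)M(\alpha_j)$, expanding $\det(A^tA)$ by the Leibniz formula and applying the strong triangle inequality for $|\cdot|_p$ gives
$$|\det(A^tA)|_p\leq\max_{\sigma\in S_n}\prod_{i=1}^n|\alpha_i\cdot\alpha_{\sigma(i)}|_p\leq\left(\prod_{i=1}^nM(\alpha_i)\right)^2,$$
because $\prod_i M(\alpha_{\sigma(i)})=\prod_i M(\alpha_i)$ for every permutation $\sigma$. Taking square roots yields $\det(\Lambda)\leq\prod_{i=1}^nM(\alpha_i)=\prod_{i=1}^n\overline{\lambda_i}(\Lambda)$, which is exactly (1).

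For part (2) I would run the same computation with an $N$-orthogonal basis $\alpha_1,\ldots,\alpha_n$, so that $N(\alpha_i)=\overline{\lambda_i}(\Lambda)$ now refers to the successive maxima with respect to $N$. The Gram bound still reads $\det(\Lambda)\leq\prod_{i=1}^nM(\alpha_i)$, and invoking Lemma \ref{normequiv} to replace each $M(\alpha_i)$ by $c_2N(\alpha_i)$ produces $\det(\Lambda)\leq c_2^n\prod_{i=1}^n\overline{\lambda_i}(\Lambda)$; one then sets $c''=1/c_2^n$.

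I expect the only genuine step to be the ultrametric Gram-determinant estimate in part (1); everything else is bookkeeping. The point worth checking is that the norm-orthogonal basis supplied by Theorem \ref{latticeorthobasis} is precisely what ties the abstract successive maxima to the concrete entries of the Gram matrix: without orthogonality one only recovers the weaker bound $\det(\Lambda)\leq\lambda_1^n$ of Theorem \ref{padicminkowski}, and the refinement to the full product $\prod_{i=1}^n\overline{\lambda_i}(\Lambda)$ would be lost.
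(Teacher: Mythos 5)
Your proposal is correct and follows essentially the same route as the paper: pick a norm-orthogonal basis via Theorem \ref{latticeorthobasis} so that the basis norms realize the successive maxima, bound the Gram determinant entrywise using the ultrametric Leibniz expansion, and rescale by $c_2$ from Lemma \ref{normequiv} for part (2). Your explicit remark that part (2) requires an $N$-orthogonal (rather than $M$-orthogonal) basis is a point the paper leaves implicit, but the argument is otherwise identical.
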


\begin{proof}
Suppose $\Lambda=\mathcal{L}(B)$ with $B=(b_1,\ldots,b_n)\in k^{m\times n}$. We can choose $b_1,\ldots,b_n$ is an $M$-orthogonal basis of $\Lambda$ by Theorem \ref{latticeorthobasis}. Then $B^tB$ is an $n\times n$-matrix, whose $(i,j)$-element is $b_i\cdot b_j$, i.e.,
$$B^tB=\left(\begin{array}{lcr}
b_1\cdot b_1&\cdots&b_1\cdot b_n\\
\cdots\cdots&\cdots&\cdots\cdots\\
b_n\cdot b_1&\cdots&b_n\cdot b_n
\end{array}\right).$$
Since $|b_i\cdot b_j|_p\leq M(b_i)\cdot M(b_j)$ and the development of the determinant det$(B^tB)$ is the sum of $n!$ many terms, each term is a product of $n$ many $(b_i\cdot b_j)$'s from different row and different column, we have
$$|\mbox{det}(B^tB)|_p\leq\left(\prod_{i=1}^n\overline{\lambda_i}(\Lambda)\right)^2.$$
Since $\mbox{det}(\Lambda)=|\mbox{det}(B^tB)|_p^{1/2}$, the result follows. This proves (1). Below, we prove (2).

Since $|b_i\cdot b_j|_p\leq M(b_i)\cdot M(b_j)\leq c_2^2N(b_i)\cdot N(b_j)$, here the positive constant $c_2$ is the same as in the proof of Theorem \ref{padicminkowski}, similar to the above argument, we have
$$|\mbox{det}(B^tB)|_p\leq c_2^{2n}\cdot\left(\prod_{i=1}^n\overline{\lambda_i}(\Lambda)\right)^2.$$
Since $\mbox{det}(\Lambda)=|\mbox{det}(B^tB)|_p^{1/2}$, put $c''=1/c_2^n$, the result follows.
\end{proof}

\textbf{Remark.} Theorem \ref{padicminkowsecond} can be viewed as a $p$-adic analogue of Minkowski's second theorem for Euclidean lattices, see \cite{reg,sie,cas}.

\begin{corollary}
(1) Suppose $\Lambda$ is any $p$-adic lattice of rank $n$ in $k^m$, $\Lambda^*$ is its dual. Then we have
$$\prod_{i=1}^n\overline{\lambda_i}(\Lambda)\overline{\lambda_i}(\Lambda^*)\geq1,$$
where $\overline{\lambda_i}(\Lambda),\overline{\lambda_i}(\Lambda^*)(1\leq i\leq n)$ are the successive maxima of the lattice $\Lambda,\Lambda^*$ with respect to the norm $M$, respectively.

(2) Suppose $\Lambda$ is any $p$-adic lattice of rank $n$ in $k^m$, $\Lambda^*$ is its dual. Let $N$ be a norm on $k^m$. Then we have
$$\prod_{i=1}^n\overline{\lambda_i}(\Lambda)\overline{\lambda_i}(\Lambda^*)\geq (c'')^2,$$
where $\overline{\lambda_i}(\Lambda),\overline{\lambda_i}(\Lambda^*)(1\leq i\leq n)$ are the successive maxima of the lattice $\Lambda,\Lambda^*$ with respect to the norm $N$, respectively, and the positive constant $c''$ is the same as in Theorem \ref{padicminkowsecond} (2).
\end{corollary}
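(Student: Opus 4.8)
The plan is to reduce the Corollary to the $p$-adic Minkowski second theorem (Theorem \ref{padicminkowsecond}) applied twice, once to $\Lambda$ and once to its dual $\Lambda^*$, combined with the determinant duality of Proposition \ref{dualdet}. The crucial structural observation is that the product in the statement factors over the two lattices, namely
$$\prod_{i=1}^n\overline{\lambda_i}(\Lambda)\overline{\lambda_i}(\Lambda^*)=\left(\prod_{i=1}^n\overline{\lambda_i}(\Lambda)\right)\left(\prod_{i=1}^n\overline{\lambda_i}(\Lambda^*)\right),$$
so that a lower bound for each factor can be combined by multiplication.

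For part (1), I would first note that $\Lambda^*$ is again a $p$-adic lattice of rank $n$ in $k^m$: indeed, by Proposition \ref{latticedualbasis} its dual basis $D$ spans the same $n$-dimensional subspace as a basis of $\Lambda$, so Theorem \ref{padicminkowsecond} (1) applies to both lattices. Applying it to $\Lambda$ gives $\prod_{i=1}^n\overline{\lambda_i}(\Lambda)\geq\mbox{det}(\Lambda)$, and applying it to $\Lambda^*$ gives $\prod_{i=1}^n\overline{\lambda_i}(\Lambda^*)\geq\mbox{det}(\Lambda^*)$. Multiplying these two inequalities and substituting $\mbox{det}(\Lambda^*)=1/\mbox{det}(\Lambda)$ from Proposition \ref{dualdet}, the two determinants cancel and we obtain the asserted bound $\geq 1$.

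For part (2), the argument is identical but now invokes Theorem \ref{padicminkowsecond} (2) with the fixed norm $N$. The one point that deserves a moment's care is that the constant $c''$ is the same in both applications: it depends only on the norm $N$ on $k^m$, through the norm-equivalence constant $c_2$ of Lemma \ref{normequiv}, and not on the particular lattice, so it may be used unchanged for $\Lambda$ and for $\Lambda^*$. This yields $\prod_{i=1}^n\overline{\lambda_i}(\Lambda)\geq c''\cdot\mbox{det}(\Lambda)$ and $\prod_{i=1}^n\overline{\lambda_i}(\Lambda^*)\geq c''\cdot\mbox{det}(\Lambda^*)$; multiplying and once more cancelling the determinants via Proposition \ref{dualdet} produces the bound $\geq(c'')^2$ with the same $c''$ as in the statement.

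I do not expect any genuine obstacle here: once the factorization of the product over the two lattices is recognized, the corollary is a formal consequence of the two main results already established. The only facts requiring verification are that $\Lambda$ and $\Lambda^*$ share the common rank $n$, so that Theorem \ref{padicminkowsecond} applies verbatim to each, and that a single lattice-independent constant $c''$ governs both invocations in part (2).
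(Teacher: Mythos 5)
Your proposal is correct and is exactly the argument the paper intends: its one-line proof ``This follows from Theorem \ref{padicminkowsecond} and Proposition \ref{dualdet}'' is precisely your application of the second theorem to both $\Lambda$ and $\Lambda^*$ followed by cancellation of the determinants via $\mbox{det}(\Lambda^*)=1/\mbox{det}(\Lambda)$. Your additional remarks that $\Lambda^*$ has the same rank $n$ and that $c''$ depends only on $N$ are correct and merely make explicit what the paper leaves implicit.
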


\begin{proof}
This follows from Theorem \ref{padicminkowsecond} and Proposition \ref{dualdet}.
\end{proof}

\section{Computation of $p$-adic Minkowski's constants}\label{computconst}
A natural question is that whether the three positive constants $c,c',c''$ in Theorems \ref{padicminkowski}, \ref{padictransfer} and \ref{padicminkowsecond} are computable. Checking the proofs of these three theorems, we observe that the problem is reduced to the computation of the constant $c_2$ in Lemma \ref{normequiv}. In this section, we show that, by invoking the CVP algorithm of $p$-adic lattices in \cite{deng2}, the constant $c_2$ in Lemma \ref{normequiv} is computable, hence the three positive constants $c,c',c''$ in Theorems \ref{padicminkowski}, \ref{padictransfer} and \ref{padicminkowsecond} are computable. The main result is the following algorithmic version of Lemma \ref{normequiv}.

\begin{theorem}
Let $k=\mathbb{Q}_p$ or $k=\mathbb{F}_p((T))$. Let $V$ be a vector space over $k$ of finite dimension $m>0$. Let $N$ be a norm on $V$. Assume we can efficiently compute the norm $N(v)$ for every $v\in V$. Fix a basis $e_1,\ldots,e_m$ of $V$ over $k$. For $v=\sum_{i=1}^mv_ie_i\in V$ with $v_i\in k(1\leq i\leq m)$, define $M(v)=\max_{1\leq i\leq m}|v_i|_p$. $M$ is a norm on $V$. Then the two norms $M$ and $N$ are equivalent, i.e., there are two computable positive constants $c_1,c_2$ such that
$$c_1N(v)\leq M(v)\leq c_2N(v),\forall v\in V.$$
\end{theorem}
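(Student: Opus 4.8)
The plan is to construct the two constants separately. The lower constant $c_1$ is elementary, and the upper constant $c_2$ is the substantive part for which the CVP algorithm is needed. For $c_1$, I would use the ultrametric inequality directly: writing $v=\sum_{i=1}^m v_ie_i$, we have $N(v)\le\max_i|v_i|_pN(e_i)\le\left(\max_{1\le i\le m}N(e_i)\right)M(v)$. Hence $c_1=1/\max_{1\le i\le m}N(e_i)$ satisfies $c_1N(v)\le M(v)$, and it is computable because we have assumed that $N(e_i)$ is computable for each basis vector, each $N(e_i)$ being a positive real since $e_i\ne0$.

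For $c_2$ the first move is to reduce the inequality $M(v)\le c_2N(v)$ to the ``unit sphere'' of $M$ by homogeneity. For nonzero $v$ the value $M(v)$ lies in $p^{\mathbb{Z}}$, say $M(v)=p^t$; scaling by the uniformizer gives $w=\pi^tv$ with $M(w)=|\pi^t|_pM(v)=1$, and since $N(w)=|\pi^t|_pN(v)$ the inequality for $w$ is equivalent to that for $v$. Thus it suffices to compute $\mu:=\min\{N(v):M(v)=1\}$ and put $c_2=1/\mu$; the minimum is attained and positive by the compactness/discreteness proposition recalled in Section \ref{lvpcvp} (applied to the compact set $\{M(v)\le1\}$), which also reproves the non-effective Lemma \ref{normequiv}. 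The whole problem is therefore reduced to computing this single number $\mu$.

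The key step is to express $\mu$ through finitely many CVP instances. Observe that $\{v:M(v)\le1\}$ is exactly the full-rank lattice $\Lambda_0=\mathcal{L}(e_1,\ldots,e_m)$, and $\{v:M(v)=1\}=\Lambda_0\setminus\pi\Lambda_0$ is the set of vectors primitive modulo $\pi$, where $\pi\Lambda_0=\mathcal{L}(\pi e_1,\ldots,\pi e_m)$. Using $\Lambda_0/\pi\Lambda_0\cong\mathbb{F}_p^m$, choose a lift $c$ of each of the $p^m-1$ nonzero residue classes. Every $v$ in the coset $c+\pi\Lambda_0$ is primitive (its residue is $\overline{c}\ne0$), and $\min_{v\in c+\pi\Lambda_0}N(v)=\mbox{dist}(c,\pi\Lambda_0)$ with respect to $N$, which is precisely one CVP instance for the lattice $\pi\Lambda_0$ and target $c$. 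Consequently $\mu=\min\{\mbox{dist}(c,\pi\Lambda_0):c\in R\}$, where $R$ is a (finite) set of representatives of the nonzero classes, and $\mu>0$ since each $c\notin\pi\Lambda_0$. Invoking the CVP algorithm of \cite{deng2} for the norm $N$ (legitimate because we assume $N$ is efficiently computable) on these finitely many targets yields $\mu$, hence $c_2=1/\mu$. As a cross-check one may note that $\mu=\overline{\lambda_m}(\Lambda_0)$, the smallest successive maximum of $\Lambda_0$: if $\alpha_1,\ldots,\alpha_m$ is an $N$-orthogonal basis of $\Lambda_0$ (Theorem \ref{latticeorthobasis}) with $N(\alpha_1)\ge\cdots\ge N(\alpha_m)$, then any primitive $v=\sum a_i\alpha_i$ has some unit $a_j$, so $N(v)=\max_i|a_i|_pN(\alpha_i)\ge N(\alpha_m)$, with equality at $v=\alpha_m$.

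The main obstacle is exactly this computation of $\mu$: a priori it is an infimum over the infinite compact set $\Lambda_0\setminus\pi\Lambda_0$, with no obvious algorithm. The residue-class decomposition is what renders it effective, collapsing the infimum to a minimum of $p^m-1$ exactly solvable CVP distances. Once $\mu$ is in hand the remaining work is bookkeeping, and combining $c_1$ and $c_2$ gives computable constants in Lemma \ref{normequiv}; propagating them through the proofs of Theorems \ref{padicminkowski}, \ref{padictransfer} and \ref{padicminkowsecond} then shows $c,c',c''$ are computable as well.
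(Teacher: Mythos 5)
Your proposal is correct, but for the substantive direction $M(v)\leq c_2N(v)$ you take a genuinely different route from the paper. The paper normalizes each nonzero $v$ by dividing by a coordinate $v_j$ of maximal absolute value, which places $v/v_j$ in the coset $e_j+\mathcal{L}(e_1,\ldots,e_{j-1},e_{j+1},\ldots,e_m)$; the constant is then $\min_{1\leq j\leq m}\mbox{dist}(e_j,\mathcal{L}(e_1,\ldots,e_{j-1},e_{j+1},\ldots,e_m))$, obtained from only $m$ CVP instances, each on a lattice of rank $m-1$ with target outside its span. You instead normalize to the ``unit sphere'' $\{M=1\}=\Lambda_0\setminus\pi\Lambda_0$ and split it into the $p^m-1$ nonzero cosets of $\pi\Lambda_0$, requiring $p^m-1$ CVP instances on the full-rank lattice $\pi\Lambda_0$. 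Both reductions are sound and in fact produce the same optimal constant: every coset $e_j+\mathcal{L}(e_1,\ldots,e_{j-1},e_{j+1},\ldots,e_m)$ sits inside $\{M=1\}$, and conversely every $v$ with $M(v)=1$ lands in one of those cosets after division by a unit coordinate, so the paper's minimum equals your $\mu$. The trade-off is that the paper's version uses exponentially fewer CVP calls (in $m$) and is independent of $p$, while yours yields the pleasant identification $\mu=\overline{\lambda_m}(\mathcal{L}(e_1,\ldots,e_m))$, linking the equivalence constant to the smallest successive maximum of the unit lattice --- a conceptual bonus the paper does not record.
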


\begin{proof}
If $m=1$, since $N(v)=N(v_1e_1)=N(e_1)\cdot|v_1|_p=N(e_1)M(v)$, the result is clear. Suppose $m>1$.

For $v=\sum_{i=1}^mv_ie_i\in V$ with $v_i\in k(1\leq i\leq m)$, we have
$$N(v)=N\left(\sum_{i=1}^mv_ie_i\right)\leq\max_{1\leq i\leq m}N(v_ie_i)\leq\max_{1\leq i\leq m}N(e_i)\cdot\max_{1\leq i\leq m}|v_i|_p.$$
That is $N(v)\leq C\cdot M(v)$ with $C=\max_{1\leq i\leq m}N(e_i)$. Obviously, the constant $C$ is computable.

On the other hand, for $v\neq0$, then not all $v_i(1\leq i\leq m)$ are zero. Suppose $|v_j|_p=\max_{1\leq i\leq m}|v_i|_p$ for some $j\in\{1,\ldots,m\}$. Then
\begin{equation}\label{normmn}
N(v)=N\left(v_j\cdot\frac{v}{v_j}\right)=|v_j|_p\cdot N\left(\frac{v}{v_j}\right)=M(v)\cdot N\left(\frac{v}{v_j}\right).
\end{equation}
Since $\frac{v_i}{v_j}\in\mathcal{O}_k$ for each $i=1,\ldots,m$, we see that
$$\frac{v}{v_j}\in e_j+\mathcal{L}(e_1,\ldots,e_{j-1},e_{j+1},\ldots,e_m).$$
Letting $j=1,\ldots,m$, by solving $m$ many CVP-instances with the rank-$(m-1)$ lattice $\mathcal{L}(e_1,\ldots,e_{j-1},e_{j+1},\ldots,e_m)$, the target vector $e_j$ and the norm $N$ via the algorithm in \cite{deng2}, we can compute the distances dist$(e_j,\mathcal{L}(e_1,\ldots,e_{j-1},e_{j+1},\ldots,e_m))$ for $1\leq j\leq m$. Put
$$c=\min_{1\leq j\leq m}\mbox{dist}(e_j,\mathcal{L}(e_1,\ldots,e_{j-1},e_{j+1},\ldots,e_m)).$$
The constant $c$ is clearly computable. By equation (\ref{normmn}), we have
$$N(v)\geq c\cdot M(v),\forall v\in V.$$
The proof is complete.

\end{proof}

Usually, as in \cite{deng1,deng2}, we take $V$ be an extension field of $k$ of finite degree, and $N$ be the $p$-adic absolute value on $V$. So, making the constants $c,c',c''$ in Theorems \ref{padicminkowski}, \ref{padictransfer} and \ref{padicminkowsecond} computable for a general norm $N$ is useful.

\vspace{0.5cm}
\textbf{Acknowledgments}: We thank Dr. Zhaonan Wang and Dr. Chi Zhang for helpful discussion. This work was supported by National Natural Science Foundation of China (No. 12271517) and National Key Research and Development Project of China (No. 2018YFA0704705).

\end{document}